\renewcommand{\star}{*}
\newcommand{\mat}[1]{\ensuremath{\bm{#1}}} 
\renewcommand{\vec}[1]{\ensuremath{\bm{#1}}}
\newcommand{\e}{\ensuremath{\mathrm{e}}}
\newcommand{\E}{\ensuremath{\mathbb{E}}}
\newcommand{\Prob}[1]{\ensuremath{\mathbb{P}\left\{#1\right\}}}
\newcommand{\lnorm}[2]{\ensuremath{\left\| #2 \right\|_{#1}}}
\newcommand{\R}{\ensuremath{\mathbb{R}}}
\newcommand{\C}{\ensuremath{\mathbb{C}}}
\newcommand{\randcon}{\ensuremath{\Psi}}
\newcommand{\s}{s} 
\newcommand{\norm}[1]{\ensuremath{\left\|#1\right\|}}
\newcommand{\snorm}[1]{\ensuremath{\|#1\|}}
\newcommand{\lambdamax}[1]{\ensuremath{\lambda_{\mathrm{max}}\left(#1\right)}}
\newcommand{\lambdamin}[1]{\ensuremath{\lambda_{\mathrm{min}}\left(#1\right)}}
\newcommand{\Isom}[2]{\ensuremath{\mathbb{V}_{#1}^{#2}}}
\newcommand{\samats}[1]{\ensuremath{\mathbb{M}^{#1}_{\mathrm{sa}}}}
\DeclareMathOperator{\tr}{tr}
\DeclareMathOperator{\rank}{rank}
\newcommand{\trexp}[1]{\ensuremath{\tr\exp\left\{#1\right\}}}
\renewcommand*\env@cases[1][1.2]{%
  \let\@ifnextchar\new@ifnextchar
  \left\lbrace
  \def\arraystretch{#1}%
  \array{@{}l@{\quad}l@{}}%
}
\newtheorem{thm}{Theorem}
\newtheorem{prop}{Proposition}
\newtheorem{lemma}[thm]{Lemma}
\newtheorem{cor}[thm]{Corollary}
\theoremstyle{remark}
\newtheorem{remark}{Remark}
\numberwithin{equation}{section}
\numberwithin{thm}{section}
\numberwithin{prop}{section}
\numberwithin{defn}{section}
\numberwithin{remark}{section}
\title[Tail Bounds for Eigenvalues of Random Matrices]{Tail Bounds for All Eigenvalues \\ of A Sum of Random Matrices}
\date{July 21, 2011}
\author[A.~Gittens]{Alex~Gittens}
\author[J.~A.~Tropp]{Joel~A.~Tropp}
\thanks{Both authors can be reached at Annenberg Center, MC 305-16, California Institute of Technology, 1200 E. California Blvd., Pasadena, CA 91125. Email: {\tt gittens@cms.caltech.edu} and {\tt jtropp@cms.caltech.edu}. Research supported by ONR awards N00014-08-1-0883 and N00014-11-1-0025, AFOSR award FA9550-09-1-0643, and a Sloan Fellowship.}
\begin{document}
\begin{abstract}
This work introduces the minimax Laplace transform method, a modification of the cumulant-based matrix Laplace transform method developed in \cite{T10a} that yields \emph{both} upper and lower bounds on \emph{each} eigenvalue of a sum of random self-adjoint matrices. This machinery is used to derive eigenvalue analogs of the classical Chernoff, Bennett, and Bernstein bounds.

Two examples demonstrate the efficacy of the minimax Laplace transform. The first concerns the effects of column sparsification on the spectrum of a matrix with orthonormal rows. Here, the behavior of the singular values can be described in terms of coherence-like quantities. The second example addresses the question of relative accuracy in the estimation of eigenvalues of the covariance matrix of a random process. Standard results on the convergence of sample covariance matrices provide bounds on the number of samples needed to obtain relative accuracy in the spectral norm, but these results only guarantee relative accuracy in the estimate of the maximum eigenvalue. The minimax Laplace transform argument establishes that if the lowest eigenvalues decay sufficiently fast, $\Omega(\varepsilon^{-2} \kappa_\ell^2 \ell \log p)$ samples, where $\kappa_\ell = \lambda_1(\mat{C})/\lambda_\ell(\mat{C}),$ are sufficient to ensure that the dominant $\ell$ eigenvalues of the covariance matrix of a $\mathcal{N}(\vec{0}, \mat{C})$ random vector are estimated to within a factor of $1 \pm \varepsilon$ with high probability.

\end{abstract}

\maketitle

\section{Introduction}

The field of nonasymptotic random matrix theory has traditionally focused on the problem of bounding the extreme eigenvalues of a random matrix. In some circumstances, however, we may also be interested in studying the behavior of the interior eigenvalues. In this case, classical tools do not readily apply. Indeed, the interior eigenvalues are determined by the min-max of a random process, which is very challenging to control.

This paper demonstrates that it is possible to combine the matrix Laplace transform method detailed in \cite{T10a} with the Courant--Fischer characterization of eigenvalues to obtain nontrivial bounds on the interior eigenvalues of a sum of random self-adjoint matrices. This approach expands the scope of the matrix probability inequalities from \cite{T10a} so that they provide interesting information about the bulk spectrum.

As one application of our approach, we investigate estimates for the covariance matrix of a centered stationary random process. We show that the eigenvalues of the sample covariance matrix provide relative-error approximations to the eigenvalues of the covariance matrix. We focus on Gaussian processes, but our arguments can be extended to other distributions. The following theorem distills the results in section~\ref{sec:covarianceest}.
\begin{thm}
Let $\mat{C} \in \R^{p \times p}$ be positive semidefinite. Fix an integer $\ell \leq p$ and assume the tail $\{\lambda_i(\mat{C})\}_{i > \ell}$ of the spectrum of $\mat{C}$ decays sufficiently fast that 
\[
\sum_{i > \ell} \lambda_i(\mat{C}) = \mathrm{O}(\lambda_1(\mat{C})).
\]
Let $\{\vec{\eta}_j\}_{j=1}^n \subset \R^p$ be i.i.d.~samples drawn from a $\mathcal{N}(\vec{0}, \mat{C})$ distribution. Define the sample covariance matrix
\[
\widehat{\mat{C}}_n = \frac{1}{n} \sum\nolimits_{j=1}^n \vec{\eta}_j\vec{\eta}_j^\star.
\]
Let $\kappa_\ell$ be the condition number associated with a dominant $\ell$-dimensional invariant subspace of $\mat{C},$
\[
\kappa_\ell = \frac{\lambda_1(\mat{C})}{\lambda_\ell(\mat{C})}.
\]
If $n = \Omega(\varepsilon^{-2} \kappa_\ell^2 \ell \log p),$ then with high probability
\[
|\lambda_k(\widehat{\mat{C}}_n) - \lambda_k(\mat{C})| \leq \varepsilon \lambda_k(\mat{C}) \quad \text{for } k=1, \ldots,\ell.
\]
\label{thm:examplecovarest}
\end{thm}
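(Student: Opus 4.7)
The plan is to express $\widehat{\mat{C}}_n$ as a sum of independent rank-one random matrices and apply the eigenvalue Bernstein/Bennett inequality produced earlier in the paper by the minimax Laplace transform. Writing
\[
\widehat{\mat{C}}_n \;=\; \sum_{j=1}^n \mat{X}_j, \qquad \mat{X}_j = \tfrac{1}{n}\vec{\eta}_j \vec{\eta}_j^\star,
\]
we have $\E \widehat{\mat{C}}_n = \mat{C}$. To establish relative error on each eigenvalue $\lambda_k$ for $k=1,\dots,\ell$, I would apply the eigenvalue concentration bound separately to the upper and lower tails of $\lambda_k(\widehat{\mat{C}}_n)$ and take a union bound over $k$ and over both tails.

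The Bernstein-type bound requires control of the matrix variance $\sigma^2 = \snorm{\sum_{j=1}^n \E \mat{X}_j^2}$ and of the uniform norm of the summands. The Gaussian Wick/Isserlis identity gives $\E [\vec{\eta}_j \vec{\eta}_j^\star \mat{A} \vec{\eta}_j \vec{\eta}_j^\star] = \tr(\mat{A}\mat{C})\mat{C} + 2\mat{C}\mat{A}\mat{C}$ for self-adjoint $\mat{A}$, so
\[
\E \mat{X}_j^2 = \tfrac{1}{n^2}\bigl(\tr(\mat{C})\mat{C} + 2\mat{C}^2\bigr), \qquad \sigma^2 \lesssim \tfrac{1}{n}\tr(\mat{C})\,\lambda_1(\mat{C}).
\]
The hypothesis $\sum_{i>\ell} \lambda_i(\mat{C}) = \mathrm{O}(\lambda_1(\mat{C}))$ forces $\tr(\mat{C}) = \mathrm{O}(\ell\,\lambda_1(\mat{C}))$, which is precisely what is needed to produce the factor $\ell$ in the final sample complexity.

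Assuming the eigenvalue Bernstein inequality produces a bound of the form $\Prob{|\lambda_k(\widehat{\mat{C}}_n) - \lambda_k(\mat{C})| \geq t} \leq 2p \exp(-c t^2 / \sigma^2)$ in the small-deviation regime, setting $t = \varepsilon \lambda_k(\mat{C})$ yields the requirement
\[
n \;\gtrsim\; \varepsilon^{-2} \log p \cdot \frac{\tr(\mat{C})\,\lambda_1(\mat{C})}{\lambda_k(\mat{C})^2} \;\lesssim\; \varepsilon^{-2}\,\ell\,\kappa_\ell^2\,\log p,
\]
where the last inequality uses the trace bound and the worst case $k=\ell$ in $\kappa_k \leq \kappa_\ell$. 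A union bound over $k=1,\dots,\ell$ and over both tails then gives the claimed relative-error guarantee with high probability.

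The main technical obstacle is the unboundedness of the Gaussian outer products $\vec{\eta}_j\vec{\eta}_j^\star$, which prevents a direct appeal to a Chernoff/Bernstein bound that assumes uniformly bounded summands. I would resolve this in one of two ways. The clean route is to invoke a Bennett-type variant in which the summand norm enters through a matrix mgf bound; the mgf of $\vec{\eta}_j\vec{\eta}_j^\star$ has an explicit form in terms of $(\mat{I} - 2\theta\mat{C})^{-1}$ for $\theta < (2\lambda_1(\mat{C}))^{-1}$, so the minimax Laplace argument can be executed without truncation. The alternative is to truncate on the event $\|\vec{\eta}_j\|^2 \leq C(\tr(\mat{C}) + \lambda_1(\mat{C})\log n)$, which holds for every $j$ with probability $1 - n^{-\Omega(1)}$, and to check that the mean of the truncated summands differs from $\mat{C}$ by a negligible amount that can be absorbed into the $\varepsilon$ tolerance. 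Either way, the remaining steps are routine applications of the eigenvalue inequalities already proved.
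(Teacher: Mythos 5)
Your outline follows essentially the same strategy as the paper: write $\widehat{\mat{C}}_n$ as a sum of independent rank-one Wishart matrices, apply a minimax-Laplace eigenvalue Bernstein bound to each tail, invoke the decay hypothesis to control the trace, and union-bound over $k \le \ell$ and over both tails. Your Wick/Isserlis computation of $\E\mat{X}_j^2$ and the observation that $\sum_{i>\ell}\lambda_i = \mathrm{O}(\lambda_1)$ forces $\tr(\mat{C}) = \mathrm{O}(\ell\,\lambda_1)$ are both correct, as is your identification of unboundedness as the obstruction; your ``clean route'' via the explicit Gaussian quadratic-form mgf is a legitimate alternative to the paper's device, which instead derives the subexponential moment growth bound $\E(\vec{\xi}\vec{\xi}^\star)^m \preceq 2^m m!(\tr\mat{G})^{m-1}\mat{G}$ (Lemma~\ref{lemma:momentbound}) and feeds it into Theorem~\ref{thm:subexponentialbernstein}.

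The one place your route is genuinely coarser than the paper's is that you use the \emph{global} variance $\sigma^2 = \snorm{\sum_j\E\mat{X}_j^2} \lesssim \tfrac{1}{n}\tr(\mat{C})\lambda_1$ for every $k$. The paper instead \emph{first} compresses $\widehat{\mat{C}}_n$ onto the top-$k$ and bottom-$(p-k+1)$ invariant subspaces of $\mat{C}$ (Lemma~\ref{lemma:splittail}), and only then applies the subexponential Bernstein bound, so that the trace appearing in the variance parameter and in $B$ is $\sum_{i\ge k}\lambda_i$ for the upper tail and $\sum_{i\le k}\lambda_i$ for the lower tail, rather than $\tr(\mat{C})$. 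This two-step compression is what produces the sharper per-eigenvalue bounds of Theorem~\ref{thm:covarest} and Corollary~\ref{cor:relerrcovarest}, where the upper tail depends only on the decay below level $k$ and the lower tail only on the eigenvalues above level $k$; in particular the upper tail needs only $\kappa_\ell$, not $\kappa_\ell^2$. For Theorem~\ref{thm:examplecovarest} specifically, your worst-case bookkeeping---bounding $\tr(\mat{C})$ by $\mathrm{O}(\ell\lambda_1)$ and $\kappa_k$ by $\kappa_\ell$ for all $k\le\ell$---collapses both variance expressions to the same $\ell\kappa_\ell^2$ order, so your outline does reach the stated sample complexity $n = \Omega(\varepsilon^{-2}\kappa_\ell^2\ell\log p)$. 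You should also note explicitly that $t=\varepsilon\lambda_k$ with $\varepsilon\le 1$ lies in the subgaussian branch (i.e.\ $t \lesssim \sigma_k^2/B$), which you gesture at with ``small-deviation regime'' but do not check.
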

Thus, assuming sufficiently fast decay of the residual eigenvalues, $n = \Omega(\varepsilon^{-2} \kappa_\ell^2 \ell \log p)$ samples ensure that the top $\ell$ eigenvalues of $\mat{C}$ are captured to relative precision. Spectral decay of this sort is encountered when, e.g., the residual eigenvalues of $\mat{C}$ decay like $k^{-(1+\delta)}$ for some $\delta >0$ or when they arise from measurements corrupted by low-power white noise. 

We contrast Theorem~\ref{thm:examplecovarest} with established spectral norm error bounds for covariance estimation, which do not exploit spectral decay and require that $n = \Omega(\varepsilon^{-2} \kappa_\ell^2 p)$ samples be taken to capture the top $\ell$ eigenvalues to relative precision (see section~\ref{sec:covarianceest}). The estimate in Theorem~\ref{thm:examplecovarest} can be sharpened using information about the spectrum of $\mat{C}$ and the desired failure probability or modified to account for different types of spectral decay. The same tools used in the proof of the theorem can be used to estimate $\lambda_k(\widehat{\mat{C}}_n - \mat{C}).$

\subsection{Related Work}

We believe that this paper contains the first general-purpose tools for studying the full spectrum of a finite-dimensional random matrix. The literature on random matrix theory (RMT) contains some complementary results, but they do not seem to apply with the same generality. Methods from RMT fall into two rough categories: asymptotic methods and nonasymptotic methods. We discuss the relevant results from each in turn.

The modern asymptotic theory began in the 1950s when physicists observed that, on certain scales, the behavior of a quantum system is described by the spectrum of a random matrix \cite{Mehta04}. They further observed the phenomenon of \emph{universality}: as the dimension increases, the spectral statistics become independent of the distribution of the random matrix; instead, they are determined by the symmetries of the distribution \cite{Deift07}. Since these initial observations, physicists, statisticians, engineers, and mathematicians have found manifold applications of the asymptotic theory in high-dimensional statistics \cite{Johnstone01,Johnstone07,ElKaroui08}, physics \cite{Guhretal98,Mehta04}, wireless communication \cite{TulinoVerdu04,SilversteinTulino06}, and pure mathematics \cite{RudnickSarnack96,BerryKeating99}, to mention only a few areas. 

Asymptotic random matrix theory has developed primarily through the examination of specific classes of random matrices. We mention two well-studied classes. Sample covariance matrices take the form $n^{-1} \mat{B}_n \mat{B}_n^\star$, where the columns of $\mat{B}_n$ comprise $n$ independent observations. Wigner matrices are Hermitian matrices whose superdiagonal entries are independent, zero-mean, and have unit variance and whose diagonal entries are i.i.d., real, and have finite variance. 

The fundamental object of study in asymptotic random matrix theory is the empirical spectral distribution function (ESD). Given a random Hermitian matrix $\mat{A}$ of order $n$, its ESD
\[
F^{\mat{A}}(x) = \frac{1}{n} \#\{ 1 \leq i \leq n : \lambda_i(\mat{A}) \leq x \}
\]
is a random distribution function which encodes the statistics of the spectrum of $\mat{A}.$ Wigner's theorem \cite{Wig55}, the seminal result of the asymptotic theory, establishes that if $\{\mat{A}_n\}$ is a sequence of independent, symmetric $n \times n$ matrices with i.i.d.~$\mathcal{N}(0,1)$ entries on and above the diagonal, then the expected ESD of $n^{-1/2} \mat{A}_n$ converges weakly in probability, as $n$ approaches infinity, to the semicircular law given by 
\[
F(x) =  \frac{1}{2\pi}\int_{-\infty}^x \sqrt{4-y^2} \, \mathbf{1}_{[-2,2]}(y)\,{\rm d}y.
\] 
Thus, at least in the limiting sense, the spectra of these random matrices are well characterized. Development of the classical asymptotic theory has been driven by the natural question raised by Wigner's result: to what extent is the semicircular law, and more generally, the existence of a limiting spectral distribution (LSD) universal?

	The literature on the existence and universality of LSDs is massive; we mention only the highlights. It is now known that the semicircular law is universal for Wigner matrices. Suppose that $\{\mat{A}_n\}$ is a sequence of independent $n\times n$ Wigner matrices. Grenander established that if all the moments are finite, then the ESD of $n^{-1/2} \mat{A}_n$ converges weakly to the semicircular law in probability \cite{Grenander63}. Arnold showed that, assuming a finite fourth moment, the ESD almost surely converges weakly to the semicircular law \cite{Arnold71}. Around the same time, Mar\u{c}enko and Pastur determined the form of the limiting spectral distribution of sample covariance matrices \cite{MarcenkoPastur67}. 
	
	More recently, Tao and Vu confirmed the long-conjectured circular law hypothesis. Let $\{\mat{C}_n\}$ be a sequence of independent $n \times n$ matrices whose entries are i.i.d.~and have unit variance. Then the ESD of $n^{-1/2} \mat{C}_n$ converges weakly to the uniform measure on the unit disk, both in probability and almost surely \cite{TaoVu10a}.

Although the convergence rate of the ESD has considerable practical interest, it was not until 1993 that theoretical results became available when Bai showed that for Wigner matrices \cite{Bai93a} and sample covariance matrices \cite{Bai93b} the expected ESDs of $n^{-1/2} \mat{A}_n$ and $n^{-1} \mat{B}_n \mat{B}_n^\star,$ respectively, both converge pointwise at a rate of O($n^{-1/4}$). Later, Bai and coauthors established the pointwise convergence in probability of the ESD of the normalized Wigner matrix $n^{-1/2}\mat{A}_n$ \cite{Bai97} and greatly improved the convergence rates \cite{BaiWigner99,Bai02,Bai03}. The strongest result to date is due to Bai et al., who have shown that, if the entries of the Wigner matrix possess finite sixth moments, then pointwise convergence in probability of the ESD of $n^{-1/2} \mat{A}_n$ occurs at the rate of O($n^{-1/2}$) \cite{Bai11}. 

Classically, individual eigenvalues have been studied through the limiting behavior of the extremal eigenvalues and the asymptotic joint distribution of several eigenvalues. Much is known about the limiting distribution of the largest eigenvalues of Wigner and covariance matrices. Geman showed that if the columns of $\mat{B}_n$ are drawn from a sufficiently regular distribution, then the largest eigenvalue of the sample covariance matrix $n^{-1} \mat{B}_n \mat{B}_n^\star$ converges almost surely to a limit \cite{Geman80}. Bai, Yin, and coauthors showed that the existence of a fourth moment is both necessary and sufficient for the existence of such a limit \cite{YinBaiKrishnaiah88,BaiSilversteinYin88}. They also identified necessary and sufficient conditions for the existence of limits for the smallest and largest eigenvalues of a normalized Wigner matrix $n^{-1/2} \mat{A}_n$ \cite{BY88b}. El Karoui has recently described the limiting behavior of the leading eigenvalues of a large class of sample covariance matrices \cite{ElKaroui07}. 

Less is known about the rate of convergence of the eigenvalues, but some results are available. Write the eigenvalues of a self-adjoint matrix $\mat{A}$ in nonincreasing order $\lambda_1 \geq \ldots \geq \lambda_n.$  For $1 \leq j \leq n,$ the classical location $\gamma_j$ of the $j$th eigenvalue of the normalized Wigner matrix $n^{-1/2} \mat{A}_n$ is defined via the relation
\[
\int_{-\infty}^{\gamma_j} \rho_{sc}(x) \, dx = \frac{j}{n},
\]
where $\rho_{sc}$ is the density associated with the semicircular law. Intuitively, the facts that $F^{\frac{1}{\sqrt{n}}\mat{A}_n} \rightarrow F^{sc}$ and $F^{\frac{1}{\sqrt{n}}\mat{A}_n}(\lambda_j) = j/n$ suggest that $\frac{1}{\sqrt{n}} \lambda_j \rightarrow \gamma_j.$ Indeed, it follows from~\cite{BY88a,BY88b} that
\[
\lambda_j = \sqrt{n} \gamma_j + {\rm o}(\sqrt{n})
\]
asymptotically almost surely. Under the assumption that the entries exhibit uniform subgaussian decay, Erd\"os, Yau, and Yin have strengthened this result by showing that, up to log factors, the eigenvalues of $n^{-1/2} \mat{A}_n$ are within O($n^{-2/3}$) of their classical position with high probability \cite{ErdosYauYin10}. More generally, Tao and Vu have established the universality of a result due to Gustavsson \cite{Gustavsson05} in the complex Gaussian Wigner case: $ (\log n)^{-1/2}( \sqrt{n} \lambda_j - n \gamma_j)$ is asymptotically normally distributed \cite{TaoVu11}. Further, they have shown that eigenvalues in the bulk of the spectrum ($ j = \Omega(n)$) of a Wigner matrix satisfy
\[
\E |\lambda_j - \sqrt{n} \gamma_j|^2 = \mathrm{O}(n^{-c}),
\]
for some universal constant $c >0$ \cite{TaoVu10}.

In contrast to the asymptotic theory, which remains to a large extent driven by the study of particular classes of random matrices, the nonasymptotic theory has developed as a collection of techniques for addressing the behavior of a broad range of random matrices. The nonasymptotic theory has its roots in geometric functional analysis in the 1970s, where random matrices were used to investigate the local properties of Banach spaces \cite{LindenstraussMilman93,DavidsonSzarek01,VershyninRMTnotes}. Since then, the nonasymptotic theory has found applications in areas including theoretical computer science \cite{Achlioptas03,Vempala04,SpielmanSrivastava08}, machine learning \cite{DM05}, optimization \cite{Nem07,So09}, and numerical linear algebra \cite{DM10,Halkoetal11,Mahoney11}. 

As is the case in the asymptotic theory, the sharpest and most comprehensive results available in the nonasymptotic theory concern the behavior of Gaussian matrices. The amenability of the Gaussian distribution makes it possible to obtain results such as Szarek's nonasymptotic analog of the Wigner semicircle theorem for Gaussian matrices \cite{Sza90} and Chen and Dongarra's bounds on the condition number of Gaussian matrices \cite{ChenDongarra05}.
The properties of less well-behaved random matrices can sometimes be related back to those of Gaussian matrices using probabilistic tools, such as symmetrization; see, e.g., the derivation of Lata{\l}a's bound on the norms of zero-mean random matrices \cite{Lat04}. 

More generally, bounds on extremal eigenvalues can be obtained from knowledge of the moments of the entries. For example, the smallest singular value of a square matrix with i.i.d.~zero-mean subgaussian entries with unit variance is O($n^{-1/2}$) with high probability \cite{RV08}. Concentration of measure results, such as Talagrand's concentration inequality for product spaces \cite{Talagrand95}, have also contributed greatly to the nonasymptotic theory. We mention in particular the work of Achlioptas and McSherry on randomized sparsification of matrices \cite{AM01,AM07}, that of Meckes on the norms of random matrices \cite{Meckes04}, and that of Alon, Krivelevich and Vu \cite{AlonKrivelevichVu02} on the concentration of the largest eigenvalues of random symmetric matrices, all of which are applications of Talagrand's inequality. In cases where geometric information on the distribution of the random matrices is available, the tools of empirical process theory---such as the generic chaining, also due to Talagrand \cite{Talagrand05}---can be used to convert this geometric information into information on the spectra. One natural example of such a case consists of matrices whose rows are independently drawn from a log-concave distribution \cite{MendelsonPajor06,ALPT10b}.

The noncommutative Khintchine inequality (NCKI), which bounds the moments of the norm of a sum of fixed matrices modulated by random signs \cite{Lust-Piquard86,Lust-PiquardPisier91}, is a widely used tool in the nonasymptotic theory. Despite its power, the NCKI is unwieldy. To use it, one must reduce the problem to a suitable form by applying symmetrization and decoupling arguments and exploiting the equivalence between moments and tail bounds. It is often more convenient to apply the NCKI in the guise of a lemma, due to Rudelson \cite{RU99}, that provides an analog of the law of large numbers for sums of rank-one matrices. This result has found many applications, including column-subset selection \cite{RV07} and the fast approximate solution of least-squares problems \cite{DMMS11}. The NCKI and its corollaries do not always yield sharp results because parasitic logarithmic factors arise in many settings.

The current paper is ultimately based on the influential work of Ahlswede and Winter~\cite{AW02}. This line of research leads to explicit tail bounds for the maximum eigenvalue of a sum of random matrices. These probability inequalities parallel the classical scalar tail bounds due to Bernstein and others. Matrix probability inequalities allow us to obtain valuable information about the maximum eigenvalue of a random matrix with very little effort. Furthermore, they apply to a wide variety of random matrices. We note, however, that matrix probability inequalities can lead to parasitic logarithmic factors similar to those that emerge from the NCKI.

Major contributions to the literature on matrix probability inequalities include the papers \cite{CM08,Recht09,Gross11}. We emphasize two works of Oliveira \cite{Oliv09,Oliv10} that go well beyond earlier research. The sharpest current results appear in the works of Tropp \cite{T10a,T10b,Tropp11}. Recently, Hsu, Kakade, and Zhang \cite{HsuKakadeZhang11} have modified Tropp's approach to establish matrix probability inequalities that depend on an intrinsic dimension parameter, rather than the ambient dimension.

\subsection{Outline}
In section \ref{sec:background}, we introduce the notation used in this paper and state a convenient version of the Courant--Fischer theorem. In section \ref{sec:laplacetransform}, we use the Courant--Fischer theorem to extend the Laplace transform technique from \cite{T10a} to apply to all the eigenvalues of self-adjoint matrices, thereby obtaining the minimax Laplace transform. We apply this technique in sections \ref{sec:chernoffbounds} and \ref{sec:bernsteinbounds} to develop eigenvalue analogs of the classical Chernoff and Bernstein bounds. The final two sections illustrate, using two familiar problems, that the minimax Laplace technique gives us significantly more information on the spectra of random matrices than current approaches. In section \ref{sec:colsubsampling}, we use the Chernoff bounds to quantify the effects of column sparsification on all the singular values of matrices with orthogonal rows. In section \ref{sec:covarianceest}, we consider the question of how fast, in relative error, the eigenvalues of empirical covariance matrices converge.

\section{Background and Notation}
\label{sec:background}
We establish the notation used in the sequel and state a convenient version of the Courant--Fischer theorem.

Unless otherwise stated, we work over the complex field. The $k$th column of the matrix $\mat{A}$ is denoted by $\mat{a}_k,$ and the entries are denoted $a_{jk}$ or $(\mat{A})_{jk}.$ We define $\samats{n}$ to be the set of self-adjoint matrices with dimension $n.$ The eigenvalues of a matrix $\mat{A}$ in $\samats{n}$ are arranged in weakly decreasing order: $\lambdamax{\mat{A}} = \lambda_1(\mat{A}) \geq \lambda_2(\mat{A}) \geq \cdots \geq \lambda_n(\mat{A}) = \lambdamin{\mat{A}}.$ Likewise, singular values of a rectangular matrix $\mat{B}$ with rank $r$ are ordered $\s_1(\mat{B}) \geq \s_2(\mat{B}) \geq \cdots \geq \s_r(\mat{B}).$ The spectral norm of a matrix $\mat{B}$ is expressed as $\snorm{\mat{B}}.$ We often compare self-adjoint matrices using the semidefinite ordering. In this ordering, $\mat{A}$ is greater than or equal to $\mat{B}$, written $\mat{A} \succeq \mat{B}$ or $\mat{B} \preceq \mat{A},$ when $\mat{A} - \mat{B}$ is positive semidefinite.

The expectation of a random variable is denoted by $\E X.$ We write $X \sim \text{Bern}(p)$ to indicate that $X$ has a Bernoulli distribution with mean $p.$

One of our central tools is the variational characterization of the eigenvalues of a self-adjoint matrix given by the Courant--Fischer theorem. 
For integers $d$ and $n$ satisfying $1 \leq d \leq n$, the complex Stiefel manifold 
\[
\Isom{d}{n} = \{\mat{V} \in \C^{n \times d} \,:\, \mat{V}^\star \mat{V} = \mathbf{I} \}
\]
is the collection of orthonormal bases for the $d$-dimensional subspaces of $\C^n,$ or, equivalently, the collection of all isometric embeddings of $\C^d$ into $\C^n.$ Let $\mat{A}$ be a self-adjoint matrix with dimension $n,$ and let $\mat{V} \in \Isom{d}{n}$ be an orthonormal basis for a subspace of $\C^n.$ Then the matrix $\mat{V}^\star \mat{A} \mat{V}$ can be interpreted as the compression of $\mat{A}$ to the space spanned by $\mat{V}.$

\begin{prop}[Courant--Fischer]
\label{prop:isometrycf}
Let $\mat{A}$ be a self-adjoint matrix with dimension $n$. Then
\begin{align}
\lambda_k(\mat{A}) & = \min_{\mat{V} \in \Isom{n-k+1}{n}} \lambdamax{\mat{V}^\star\mat{AV}} \quad \text{and}
\label{eqn:maxvariation} \\
\lambda_k(\mat{A}) & = \max_{\mat{V} \in \Isom{k}{n}} \lambdamin{\mat{V}^\star \mat{AV}}. \label{eqn:minvariation}
\end{align}
A matrix $\mat{V}_- \in \Isom{k}{n}$ achieves equality in~\eqref{eqn:minvariation} if and only if its columns span a dominant $k$-dimensional invariant subspace of $\mat{A}.$ Likewise, a matrix $\mat{V}_+ \in \Isom{n-k+1}{n}$ achieves equality in~\eqref{eqn:maxvariation} if and only if its columns span a bottom $(n-k+1)$-dimensional invariant subspace of $\mat{A}$.
\end{prop}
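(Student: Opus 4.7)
The plan is to reduce everything to the spectral decomposition of $\mat{A}$ and a standard dimension-counting intersection argument, then handle the equality characterization separately. Write $\mat{A} = \sum_{i=1}^n \lambda_i(\mat{A}) \, \mat{u}_i \mat{u}_i^\star$, where $\mat{u}_1, \ldots, \mat{u}_n$ is an orthonormal eigenbasis arranged so that the eigenvalues are nonincreasing. Let $\mat{U}_{\leq k} \in \Isom{k}{n}$ be the matrix whose columns are $\mat{u}_1, \ldots, \mat{u}_k$, and let $\mat{U}_{\geq k} \in \Isom{n-k+1}{n}$ be the matrix whose columns are $\mat{u}_k, \ldots, \mat{u}_n$. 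I will prove \eqref{eqn:minvariation}; the argument for \eqref{eqn:maxvariation} is the mirror image (or follows by applying \eqref{eqn:minvariation} to $-\mat{A}$ and reindexing).

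For the easy direction, I would first exhibit a feasible choice that achieves the bound. Take $\mat{V} = \mat{U}_{\leq k}$ in~\eqref{eqn:minvariation}. Then $\mat{V}^\star \mat{A} \mat{V} = \mathrm{diag}(\lambda_1(\mat{A}), \ldots, \lambda_k(\mat{A}))$, whose smallest eigenvalue is exactly $\lambda_k(\mat{A})$. This shows $\max_{\mat{V} \in \Isom{k}{n}} \lambdamin{\mat{V}^\star \mat{A} \mat{V}} \geq \lambda_k(\mat{A})$ and also establishes the "if" direction of the equality characterization: any $\mat{V}_-$ whose columns span a dominant $k$-dimensional invariant subspace yields a compression with spectrum $\{\lambda_1(\mat{A}), \ldots, \lambda_k(\mat{A})\}$.

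For the reverse inequality, fix any $\mat{V} \in \Isom{k}{n}$. Since $\mathrm{range}(\mat{V})$ has dimension $k$ and $\mathrm{range}(\mat{U}_{\geq k})$ has dimension $n-k+1$, the sum of dimensions is $n+1 > n$, so the two subspaces intersect in a nontrivial subspace. Pick a unit vector $\mat{x}$ in the intersection and write $\mat{x} = \mat{V} \mat{y}$ with $\mat{y}$ a unit vector in $\C^k$, and $\mat{x} = \mat{U}_{\geq k} \mat{z}$ with $\mat{z}$ a unit vector in $\C^{n-k+1}$. Then $\mat{y}^\star (\mat{V}^\star \mat{A} \mat{V}) \mat{y} = \mat{x}^\star \mat{A} \mat{x} = \sum_{i \geq k} \lambda_i(\mat{A}) |z_i|^2 \leq \lambda_k(\mat{A})$, so $\lambdamin{\mat{V}^\star \mat{A} \mat{V}} \leq \lambda_k(\mat{A})$. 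Taking the maximum over $\mat{V}$ finishes \eqref{eqn:minvariation}.

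The step I expect to be the most delicate is the "only if" direction of the equality characterization, because of possible eigenvalue multiplicities. Suppose $\mat{V}_- \in \Isom{k}{n}$ attains $\lambdamin{\mat{V}_-^\star \mat{A} \mat{V}_-} = \lambda_k(\mat{A})$. The intersection argument above, applied to $\mat{V}_-$ and to each $\mat{U}_{\geq j}$ for $j = 1, \ldots, k$, forces $\lambda_j(\mat{V}_-^\star \mat{A} \mat{V}_-) \leq \lambda_j(\mat{A})$. Combining with $\tr(\mat{V}_-^\star \mat{A} \mat{V}_-) \leq \sum_{j=1}^k \lambda_j(\mat{A})$ and the hypothesis that the least eigenvalue equals $\lambda_k(\mat{A})$, I would show all these inequalities are equalities, so that the spectrum of $\mat{V}_-^\star \mat{A} \mat{V}_-$ coincides with the top $k$ eigenvalues of $\mat{A}$. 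Finally, expressing $\mat{A} \mat{V}_- - \mat{V}_- (\mat{V}_-^\star \mat{A} \mat{V}_-)$ and computing its Frobenius norm via the spectral decomposition should give zero, which identifies $\mathrm{range}(\mat{V}_-)$ as an invariant subspace of $\mat{A}$ whose eigenvalues are precisely the top $k$ of $\mat{A}$, i.e., a dominant $k$-dimensional invariant subspace. The analogous statement for $\mat{V}_+$ follows by the same reasoning applied to the min-max form.
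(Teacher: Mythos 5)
Your proof of the two identities~\eqref{eqn:maxvariation} and~\eqref{eqn:minvariation} is correct and uses the standard spectral-decomposition-plus-dimension-counting argument; the paper itself states Proposition~\ref{prop:isometrycf} as a classical fact without proof, so there is no internal proof to compare against. The ``if'' direction of the equality characterization is also fine.

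The ``only if'' direction, however, has a genuine gap, and the route you sketch cannot be repaired. From the interlacing inequalities $\lambda_j(\mat{V}_-^\star\mat{A}\mat{V}_-) \leq \lambda_j(\mat{A})$ for $j \leq k$, the trace bound $\tr(\mat{V}_-^\star\mat{A}\mat{V}_-) \leq \sum_{j\leq k}\lambda_j(\mat{A})$, and the single equality $\lambda_k(\mat{V}_-^\star\mat{A}\mat{V}_-) = \lambda_k(\mat{A})$, it simply does not follow that all $k$ inequalities tighten to equalities: there is no countervailing lower bound on the trace. In fact the conclusion you are trying to reach can be false when $\lambda_k$ has multiplicity extending past index $k$. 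Take $\mat{A} = \mathrm{diag}(2,1,1)$, $k = 2$, and $\mat{V}_- = [\mat{e}_2 \ \mat{e}_3]$. Then $\mat{V}_-^\star\mat{A}\mat{V}_- = \mathbf{I}_2$, so $\lambdamin{\mat{V}_-^\star\mat{A}\mat{V}_-} = 1 = \lambda_2(\mat{A})$ and $\mat{V}_-$ achieves equality in~\eqref{eqn:minvariation}; yet $\mathrm{range}(\mat{V}_-) = \mathrm{span}(\mat{e}_2,\mat{e}_3)$ carries the eigenvalues $\{1,1\}$, not the top two eigenvalues $\{2,1\}$, so it is not a dominant $2$-dimensional invariant subspace, and indeed $\lambda_1(\mat{V}_-^\star\mat{A}\mat{V}_-) = 1 < 2 = \lambda_1(\mat{A})$. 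The ``only if'' claim therefore requires either a nondegeneracy hypothesis ($\lambda_k > \lambda_{k+1}$, resp.\ $\lambda_{k-1} > \lambda_k$ for the $\mat{V}_+$ statement) or a weaker reading of ``dominant,'' and your Frobenius-norm computation would not close the gap because $\mathrm{range}(\mat{V}_-)$ need not be invariant-with-the-right-spectrum even when equality holds. Since the paper only ever invokes the existence of maximizers and minimizers (the ``if'' direction), this defect does not affect anything downstream, but your proof should not assert the unqualified ``only if.''
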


The $\pm$ subscripts in Proposition \ref{prop:isometrycf} are chosen to reflect the fact that $\lambda_k(\mat{A})$ is the \emph{minimum} eigenvalue of $\mat{V}_-^\star\mat{A}\mat{V}_-$ and the \emph{maximum} eigenvalue of $\mat{V}_+^\star \mat{A} \mat{V}_+.$ 
As a consequence of Proposition \ref{prop:isometrycf}, when $\mat{A}$ is self-adjoint, \mbox{$\lambda_k(-\mat{A}) = -\lambda_{n-k+1}(\mat{A}).$} This fact allows us to use the same techniques we develop for bounding the eigenvalues from above to bound them from below.

\section{ Tail Bounds For Interior Eigenvalues}
\label{sec:laplacetransform}
In this section we develop a generic bound on the tail probabilities of eigenvalues of sums of independent, random, self-adjoint matrices. We establish this bound by supplementing the matrix Laplace transform methodology of \cite{T10a} with Proposition \ref{prop:isometrycf} and a new result, due to Lieb and Seiringer \cite{LS05}, on the concavity of a certain trace function on the cone of positive-definite matrices.

 First we observe that the Courant--Fischer theorem allows us relate the behavior of the $k$th eigenvalue of a matrix to the behavior of the largest eigenvalue of an appropriate compression of the matrix.

\begin{thm}
Let $\mat{X}$ be a random self-adjoint matrix with dimension $n,$ and let $k \leq n$ be an integer. Then, for all $t \in \R,$
\begin{equation}
\Prob{\lambda_k(\mat{X}) \geq t} \leq \inf_{\theta > 0} \min_{\mat{V} \in \Isom{n-k+1}{n}} \left\{ \e^{-\theta t} \cdot \E\tr\e^{\theta \mat{V}^\star\mat{XV}} \right\}.
\label{eqn:laplacetform}
\end{equation}
\label{thm:laplacetform}
\end{thm}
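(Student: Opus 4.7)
The plan is to combine the Courant--Fischer characterization in Proposition~\ref{prop:isometrycf} with the standard matrix Laplace transform argument, applied to a compressed version of $\mat{X}$.

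First I would use the min--max identity \eqref{eqn:maxvariation}: the event $\{\lambda_k(\mat{X}) \ge t\}$ says that the \emph{minimum} over $\mat{V} \in \Isom{n-k+1}{n}$ of $\lambdamax{\mat{V}^\star\mat{XV}}$ is at least $t$. Consequently, for every \emph{fixed} $\mat{V} \in \Isom{n-k+1}{n}$,
\begin{equation*}
\{\lambda_k(\mat{X}) \ge t\} \;\subseteq\; \{\lambdamax{\mat{V}^\star\mat{XV}} \ge t\},
\end{equation*}
so $\Prob{\lambda_k(\mat{X}) \ge t} \le \Prob{\lambdamax{\mat{V}^\star\mat{XV}} \ge t}$. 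This is the crucial reduction: an interior eigenvalue of $\mat{X}$ is controlled by the top eigenvalue of a compression, which is exactly the object classical techniques can handle.

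Next, for each fixed $\mat{V}$ and each $\theta > 0$, I would run the standard Chernoff-style manipulation. Since $x \mapsto \e^{\theta x}$ is monotone increasing, Markov's inequality gives
\begin{equation*}
\Prob{\lambdamax{\mat{V}^\star\mat{XV}} \ge t}
= \Prob{\e^{\theta\,\lambdamax{\mat{V}^\star\mat{XV}}} \ge \e^{\theta t}}
\le \e^{-\theta t}\,\E\,\e^{\theta\,\lambdamax{\mat{V}^\star\mat{XV}}}.
\end{equation*}
Because the exponential of a self-adjoint matrix is positive definite and $\lambdamax{\e^{\theta \mat{Y}}} = \e^{\theta\,\lambdamax{\mat{Y}}}$, one has $\e^{\theta\,\lambdamax{\mat{V}^\star\mat{XV}}} \le \tr \e^{\theta \mat{V}^\star\mat{XV}}$, so
\begin{equation*}
\Prob{\lambdamax{\mat{V}^\star\mat{XV}} \ge t} \le \e^{-\theta t}\,\E\tr \e^{\theta \mat{V}^\star\mat{XV}}.
\end{equation*}

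Finally, since the left-hand side $\Prob{\lambda_k(\mat{X}) \ge t}$ is a single fixed number that is bounded above by the right-hand side for every choice of $\theta > 0$ and $\mat{V} \in \Isom{n-k+1}{n}$, I may pass to the infimum over $\theta$ and the minimum over $\mat{V}$, which yields the claimed bound \eqref{eqn:laplacetform}. There is no real obstacle here once the Courant--Fischer reduction is made; the only subtle point is that the compressing isometry $\mat{V}$ is \emph{fixed and deterministic} inside the probability, so the expectation and the min over $\mat{V}$ do not interact and may safely be exchanged in the final step.
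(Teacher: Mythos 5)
Your proof is correct and uses exactly the same ingredients as the paper's argument: the Courant--Fischer characterization \eqref{eqn:maxvariation}, Markov's inequality applied to the exponential, the spectral-mapping identity $\e^{\theta\lambdamax{\mat{Y}}}=\lambdamax{\e^{\theta\mat{Y}}}$, and the bound $\lambdamax{\cdot}\le\tr(\cdot)$ for positive-definite matrices. The one small structural difference is that you fix a deterministic $\mat{V}$ at the very start and observe the pointwise domination $\lambda_k(\mat{X})\le\lambdamax{\mat{V}^\star\mat{X}\mat{V}}$ (hence the event inclusion), whereas the paper applies Markov to $\lambda_k(\mat{X})$ first and only then invokes Courant--Fischer \emph{inside} the expectation, which forces it to justify pushing the minimum over $\mat{V}$ through the exponential and then through the expectation; your ordering renders those interchange steps unnecessary, so it is arguably the cleaner version of the same argument.
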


\begin{proof}
Let $\theta$ be a fixed positive number. Then
\begin{multline*}
\Prob{\lambda_k(\mat{X}) \geq t}  = 
\Prob{\lambda_k(\theta \mat{X}) \geq \theta t} = 
\Prob{\e^{\lambda_k(\theta \mat{X})} \geq \e^{\theta t}} \\
\leq  \e^{-\theta t} \cdot \E \e^{\lambda_k(\theta \mat{X})} = 
\e^{-\theta t} \cdot \E \exp\left\{\min_{\mat{V} \in \Isom{n-k+1}{n}} \lambdamax{\theta \mat{V}^\star\mat{XV}}\right\}. 
\end{multline*}
The first identity follows from the positive homogeneity of eigenvalue maps and the second from the monotonicity of the scalar exponential function. The final two relations are Markov's inequality and \eqref{eqn:maxvariation}.

To continue, we need to bound the expectation. Interchange the order of the exponential and the minimum; then apply the spectral mapping theorem to see that
\begin{align*}
 \E \exp\bigg\{\min_{\mat{V} \in \Isom{n-k+1}{n}} \lambdamax{\theta \mat{V}^\star \mat{XV}} \bigg\} & = \E \min_{\mat{V} \in \Isom{n-k+1}{n}} \lambdamax{\exp(\theta \mat{V}^\star\mat{XV})} \\
  & \leq \min_{\mat{V} \in \Isom{n-k+1}{n}} \E \lambdamax{\exp(\theta \mat{V}^\star\mat{XV})} \\
  & \leq \min_{\mat{V} \in \Isom{n-k+1}{n}} \E \tr \exp(\theta \mat{V}^\star\mat{XV}).
\end{align*}
The first inequality is Jensen's. The second inequality follows because the exponential of a self-adjoint matrix is positive definite, so its largest eigenvalue is smaller than its trace.

Combine these observations and take the infimum over all positive $\theta$ to complete the argument.
\end{proof}

We are interested in the case where the matrix $\mat{X}$ in Theorem \ref{thm:laplacetform} can be expressed as a sum of independent random matrices. In this case, we use the following result to develop the right-hand side of the Laplace transform bound \eqref{eqn:laplacetform}.



\begin{thm}
Consider a finite sequence $\{\mat{X}_j\}$ of independent, random, self-adjoint matrices with dimension $n$ and a sequence $\{\mat{A}_j\}$ of fixed self-adjoint matrices with dimension $n$ that satisfy the relations
\begin{equation}
 \E\e^{\mat{X}_j} \preceq \e^{\mat{A}_j}. 
 \label{eqn:logmgfdomination}
\end{equation}
Let $\mat{V} \in \Isom{k}{n}$ be an isometric embedding of $\C^k$ into $\C^n$ for some $k \leq n.$ Then
\begin{equation}
 \E \tr \exp\left\{\sum\nolimits_j \mat{V}^\star \mat{X}_j \mat{V} \right\} \leq \tr \exp\left\{\sum\nolimits_j \mat{V}^\star \mat{A}_j \mat{V} \right\}.
 \label{eqn:pinchedmgf}
\end{equation}
In particular, 
\begin{equation}
 \E \tr \exp\left\{\sum\nolimits_j \mat{X}_j \right\} \leq \tr \exp\left\{\sum\nolimits_j \mat{A}_j \right\}.
 \label{eqn:unpinchedmgf}
\end{equation}

\label{thm:mgf}
\end{thm}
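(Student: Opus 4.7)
The plan is to carry out the inductive ``tower property'' argument that Tropp uses in the uncompressed case \cite{T10a}, but to replace the classical Lieb concavity theorem---which powered the uncompressed version---by the compressed concavity result of Lieb and Seiringer \cite{LS05}. Since \eqref{eqn:unpinchedmgf} is the special case $\mat{V} = \mathbf{I}_n$ of \eqref{eqn:pinchedmgf}, it suffices to prove the latter.

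The workhorse, extracted from \cite{LS05}, is the assertion that for every fixed self-adjoint $\mat{H}$ of dimension $k$ and every isometry $\mat{V} \in \Isom{k}{n}$, the map
\[
\mat{Y} \;\longmapsto\; \tr \exp\bigl(\mat{H} + \mat{V}^\star(\log \mat{Y}) \mat{V}\bigr)
\]
is concave on the cone of positive definite matrices of dimension $n$. With this in hand, I would set $\mat{S}_m = \sum_{j=1}^m \mat{X}_j$ and peel off one summand at a time. Conditioning on $\mat{X}_1, \ldots, \mat{X}_{m-1}$---so that $\mat{H} := \mat{V}^\star \mat{S}_{m-1} \mat{V}$ is deterministic---and taking the inner expectation over the independent summand $\mat{X}_m$, write $\mat{X}_m = \log \e^{\mat{X}_m}$ and apply Jensen's inequality to the Lieb--Seiringer concave function:
\[
\E\bigl[\, \tr \exp(\mat{V}^\star \mat{S}_m \mat{V}) \,\bigm|\, \mat{X}_1, \ldots, \mat{X}_{m-1}\bigr] \leq \tr \exp\bigl(\mat{V}^\star \mat{S}_{m-1} \mat{V} + \mat{V}^\star (\log \E \e^{\mat{X}_m}) \mat{V}\bigr).
\]
Hypothesis \eqref{eqn:logmgfdomination} combined with the operator monotonicity of the logarithm gives $\log \E \e^{\mat{X}_m} \preceq \mat{A}_m$; conjugation by $\mat{V}$ preserves the semidefinite order, addition of $\mat{V}^\star \mat{S}_{m-1} \mat{V}$ preserves it as well, and then the monotonicity of $\tr \exp(\cdot)$ with respect to the semidefinite order---a direct consequence of Weyl's inequalities applied eigenvalue-by-eigenvalue---absorbs the $\mat{A}_m$ into the exponent. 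Iterating this step, peeling off $\mat{X}_{m-1}$ next (now with $\mat{H}$ equal to the compression of $\mat{S}_{m-2} + \mat{A}_m$), and continuing down to $\mat{X}_1$, exchanges each $\mat{X}_j$ for its deterministic dominant $\mat{A}_j$ and produces \eqref{eqn:pinchedmgf} after $m$ applications.

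The principal obstacle is the invocation of the Lieb--Seiringer concavity theorem; once it is black-boxed, the remaining ingredients (Jensen's inequality, operator monotonicity of $\log$, monotonicity of $\tr \exp$ under $\preceq$, and the identity $\log \e^{\mat{X}_m} = \mat{X}_m$ valid because $\mat{X}_m$ is self-adjoint) are routine. A minor bookkeeping point is that at each stage the ``fixed'' matrix $\mat{H}$ actually depends on the summands not yet peeled off, but this is handled cleanly by the successive-conditioning structure: conditional on the prior $\mat{X}_j$'s, the matrix $\mat{H}$ is indeed deterministic, and conditional independence reduces the inner expectation to an unconditional one over a single summand.
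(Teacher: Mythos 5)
Your proof is correct and follows essentially the same route as the paper: write $\mat{X}_j = \log\e^{\mat{X}_j}$, apply the Lieb--Seiringer concavity result with Jensen's inequality under successive conditioning to pull the expectation inside the logarithm, then invoke operator monotonicity of $\log$, the hypothesis \eqref{eqn:logmgfdomination}, and monotonicity of $\tr\exp$ under the semidefinite order to swap each $\mat{X}_j$ for $\mat{A}_j$, iterating over the summands. The paper's proof is line-for-line the same argument, phrased via the tower-property notation $\E\E_1\cdots\E_{\ell-1}$.
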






Theorem \ref{thm:mgf} is an extension of Lemma 3.4 of \cite{T10a}, which establishes the special case \eqref{eqn:unpinchedmgf}. The proof depends upon a recent result due to Lieb and Seiringer \cite[Thm.~3]{LS05} that extends Lieb's earlier result \cite[Thm.~6]{Lieb1973}.

\begin{prop}[Lieb--Seiringer 2005]
Let $\mat{H}$ be a self-adjoint matrix with dimension $k.$ Let $\mat{V} \in \Isom{k}{n}$ be an isometric embedding of $\C^k$ into $\C^n$ for some $k \leq n.$ Then the function
\begin{equation*}
 \mat{A} \longmapsto \trexp{\mat{H} + \mat{V}^\star (\log\mat{A}) \mat{V}}
\end{equation*}
is concave on the cone of positive-definite matrices in \samats{n}.
 \label{prop:pinchedtracefunctional}
\end{prop}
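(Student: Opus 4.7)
The plan is to reduce the claim to Lieb's classical concavity theorem (the case $\mat{V}=\mat{I}_n$), which asserts that $\mat{A}\mapsto\trexp{\mat{H}+\log\mat{A}}$ is concave on positive-definite matrices. The bridge between the two statements is the Lie--Trotter product formula, applied twice, to express the matrix exponential as a limit of products of positive matrix powers that are amenable to Epstein/Lieb joint-concavity theorems.

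First, by Lie--Trotter,
\[
\trexp{\mat{H}+\mat{V}^\star(\log\mat{A})\mat{V}} = \lim_{m\to\infty}\tr\!\left[\bigl(e^{\mat{H}/m}\,e^{\mat{V}^\star(\log\mat{A})\mat{V}/m}\bigr)^m\right].
\]
Second, using $\mat{A}^p=\mat{I}+p\log\mat{A}+O(p^2)$ as $p\to 0^+$ together with $(\mat{I}+p\mat{X})^{1/p}\to e^{\mat{X}}$,
\[
e^{\mat{V}^\star(\log\mat{A})\mat{V}/m} = \lim_{p\to 0^+}\bigl(\mat{V}^\star \mat{A}^p \mat{V}\bigr)^{1/(mp)}.
\]
Since pointwise limits of concave functions are concave, it is enough to show that for each fixed positive integer $m$ and each $p>0$ with $mp\leq 1$, the map
\[
F_{m,p}(\mat{A}) \;=\; \tr\!\left[\bigl(e^{\mat{H}/m}\,(\mat{V}^\star \mat{A}^p \mat{V})^{1/(mp)}\bigr)^m\right]
\]
is concave on the positive-definite cone in $\samats{n}$.

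To establish concavity of $F_{m,p}$, I would lean on Epstein's concavity theorem---$\mat{A}\mapsto\tr(\mat{K}^\star\mat{A}^p\mat{K})^q$ is concave for $0<p,q$ with $pq\leq 1$---and, more generally, on Lieb-type joint concavity for traces of products of fractional powers. The compression $\mat{V}^\star\mat{A}^p\mat{V}$ matches Epstein's form with $\mat{K}=\mat{V}$; the outer exponent $1/(mp)\geq 1$ and the interleaved factors $e^{\mat{H}/m}$ are then handled by an inductive application of joint concavity, arranging the total power of $\mat{A}$ appearing in each factor to sit within the range covered by these theorems.

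The main obstacle is precisely this concavity of $F_{m,p}$: the trace is of an $m$-fold product that simultaneously involves a compression $\mat{V}^\star(\cdot)\mat{V}$, a fractional outer power, and fixed factors, and standard Lieb joint concavity does not directly cover compressions together with outer powers. This is exactly where the technical innovation of Lieb and Seiringer lies and where genuine strengthening beyond the 1973 Lieb theorem is required. Once concavity of each $F_{m,p}$ is secured, taking the limits $p\to 0^+$ and $m\to\infty$ is routine and yields the stated concavity of $\mat{A}\mapsto\trexp{\mat{H}+\mat{V}^\star(\log\mat{A})\mat{V}}$.
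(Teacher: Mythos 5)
There is a genuine gap, and it sits exactly where you place it: the concavity of
\[
F_{m,p}(\mat{A})=\tr\!\left[\bigl(\e^{\mat{H}/m}\,(\mat{V}^\star\mat{A}^p\mat{V})^{1/(mp)}\bigr)^m\right]
\]
is never established, and it does not follow from Epstein's theorem or from Lieb-type joint concavity. Those results cover traces of a \emph{single} block $\tr(\mat{K}^\star\mat{A}^p\mat{K})^q$ (with $0<p\le 1$, $pq\le 1$); they say nothing about an $m$-fold alternating product with fixed factors interleaved and a fractional power inside each factor. For $m\ge 2$ such multi-factor trace functions are precisely the territory where concavity claims are unavailable or false (this is the circle of difficulties around the BMV conjecture, which is the very subject of the Lieb--Seiringer paper), so "an inductive application of joint concavity" is not a step that can be carried out; your own closing admission that this is "where the technical innovation of Lieb and Seiringer lies" concedes that the reduction has not simplified the problem. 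As it stands, the argument proves the proposition only modulo an unproven claim that is at least as strong as the proposition itself. For comparison, the paper does not prove this statement at all: it invokes it as \cite[Thm.~3]{LS05}, so any self-contained derivation goes beyond what the paper attempts.

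The double limit is also unnecessary, and removing it is the way to rescue your idea. Since $\mat{V}^\star\mat{V}=\mathbf{I}_k$, one has $\e^{p\mat{H}/2}\,\mat{V}^\star\mat{A}^p\mat{V}\,\e^{p\mat{H}/2}=\mathbf{I}+p\bigl(\mat{H}+\mat{V}^\star(\log\mat{A})\mat{V}\bigr)+\mathrm{O}(p^2)$ as $p\to 0^+$, whence
\[
\trexp{\mat{H}+\mat{V}^\star(\log\mat{A})\mat{V}}=\lim_{p\to 0^+}\tr\!\left[\bigl(\e^{p\mat{H}/2}\,\mat{V}^\star\mat{A}^p\mat{V}\,\e^{p\mat{H}/2}\bigr)^{1/p}\right]
=\lim_{p\to 0^+}\tr\bigl(\mat{K}_p^\star\mat{A}^p\mat{K}_p\bigr)^{1/p},\qquad \mat{K}_p=\mat{V}\e^{p\mat{H}/2}.
\]
Each function on the right is concave in $\mat{A}$ by Epstein's theorem (which allows an arbitrary fixed matrix $\mat{K}_p$ and exponent exactly $1/p$), and pointwise limits preserve concavity, which yields the proposition with no $m$-fold product ever appearing. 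In short: your strategy (approximate the exponential, use Epstein, pass to the limit) is sound, but the particular Lie--Trotter factorization you chose creates an object outside the reach of the cited concavity theorems, and the proof is incomplete until that single-limit reduction (or an equivalent new concavity result) replaces it.
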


\begin{proof}[Proof of Theorem \ref{thm:mgf}]
 First, note that \eqref{eqn:logmgfdomination} and the operator monotonicity of the matrix logarithm yield the following inequality for each $k$: 
\begin{equation}
\log \E \e^{\mat{X}_k} \preceq \mat{A}_k.
\label{eqn:logdom}
\end{equation}
Let $\E_k$ denote expectation conditioned on the first $k$ summands, $\mat{X}_1$ through $\mat{X}_k.$ Then
\begin{align*}
 \E \trexp{\sum_{j \leq \ell} \mat{V}^\star \mat{X}_j \mat{V}} & = \E\E_1\cdots \E_{\ell-1} \trexp{\sum_{j \leq \ell-1} \mat{V}^\star \mat{X}_j \mat{V} + \mat{V}^\star \left( \log \e^{\mat{X}_\ell} \right) \mat{V}} \\
 & \leq \E\E_1\cdots \E_{\ell-2} \trexp{\sum_{j \leq \ell-1} \mat{V}^\star \mat{X}_j \mat{V} + \mat{V}^\star \left( \log\E\e^{\mat{X}_\ell} \right) \mat{V}} \\
 & \leq \E\E_1\cdots \E_{\ell-2} \trexp{\sum_{j \leq \ell-1} \mat{V}^\star \mat{X}_j \mat{V} + \mat{V}^\star \left( \log\e^{\mat{A}_\ell} \right) \mat{V} } \\
 & = \E\E_1\cdots \E_{\ell-2} \trexp{\sum_{j \leq \ell-1} \mat{V}^\star \mat{X}_j \mat{V} + \mat{V}^\star \mat{A}_\ell \mat{V} }.
\end{align*}
The first inequality follows from Proposition \ref{prop:pinchedtracefunctional} and Jensen's inequality, and the second depends on \eqref{eqn:logdom} and the monotonicity of the trace exponential. Iterate this argument to complete the proof.
\end{proof}

Our main result follows from combining Theorem \ref{thm:laplacetform} and Theorem \ref{thm:mgf}.

\begin{thm}[Minimax Laplace Transform]
Consider a finite sequence $\{\mat{X}_j\}$ of independent, random, self-adjoint matrices with dimension $n$, and let $k\leq n$ be an integer.
\begin{enumerate}[(i)]
 \item Let $\{\mat{A}_j\}$ be a sequence of self-adjoint matrices that satisfy the semidefinite relations
\[
 \E\e^{\theta \mat{X}_j} \preceq \e^{g(\theta) \mat{A}_j}
\]
where $g : (0,\infty) \rightarrow [0, \infty).$ Then, for all $t \in \R,$ 
\[
 \Prob{\lambda_k\left(\sum\nolimits_j \mat{X}_j \right) \geq t } \leq \inf_{\theta > 0}\; \min_{\mat{V} \in \Isom{n-k+1}{n}} \bigg[ \e^{-\theta t} \cdot \tr \exp\left\{ g(\theta) \sum\nolimits_j \mat{V}^\star \mat{A}_j \mat{V}\right\}\bigg].
\]
 \label{eqn:uncompressedeigtails}
\item Let $\{\mat{A}_j:\Isom{n-k+1}{n} \rightarrow \samats{n} \}$ be a sequence of functions that satisfy the semidefinite relations 
\[
 \E\e^{\theta\mat{V}^\star \mat{X}_j \mat{V}} \preceq \e^{g(\theta) \mat{A}_j(\mat{V})}
\]
for all $\mat{V} \in \Isom{n-k+1}{n},$ where $g : (0, \infty) \rightarrow [0, \infty).$ Then, for all~$t \in \R,$
\[
 \Prob{\lambda_k\left(\sum\nolimits_j \mat{X}_j \right) \geq t } \leq \inf_{\theta > 0 } \; \min_{\mat{V} \in \Isom{n-k+1}{n}} \bigg[ \e^{-\theta t} \cdot \tr \exp\left\{ g(\theta) \sum\nolimits_j \mat{A}_j(\mat{V}) \right\}\bigg].
\]
 \label{eqn:compressedeigtails}
\end{enumerate}

\label{thm:eigtails}
\end{thm}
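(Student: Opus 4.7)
The theorem combines the two ingredients developed earlier in the section: the compressed matrix Laplace transform bound of Theorem \ref{thm:laplacetform} and the compressed subadditivity estimate for cumulant generating functions of Theorem \ref{thm:mgf}. The plan is essentially to fix $\theta > 0$ and $\mat{V} \in \Isom{n-k+1}{n}$, apply these two theorems in sequence, and take the infimum and minimum at the very end.

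For part (i), I would begin by applying Theorem \ref{thm:laplacetform} to the random self-adjoint matrix $\mat{X} = \sum_j \mat{X}_j$. This gives, for each $\theta > 0$ and each $\mat{V} \in \Isom{n-k+1}{n}$,
\[
\Prob{\lambda_k\Bigl(\sum\nolimits_j \mat{X}_j\Bigr) \geq t} \leq \e^{-\theta t} \cdot \E \tr \exp\Bigl\{\mat{V}^\star \Bigl(\sum\nolimits_j \theta \mat{X}_j\Bigr) \mat{V}\Bigr\}.
\]
(Strictly speaking, Theorem \ref{thm:laplacetform} gives an infimum/minimum on the right-hand side, but we keep them outside for the moment.) Now I would invoke Theorem \ref{thm:mgf} with the sequences $\{\theta \mat{X}_j\}$ and $\{g(\theta) \mat{A}_j\}$, noting that the hypothesis $\E \e^{\theta \mat{X}_j} \preceq \e^{g(\theta) \mat{A}_j}$ is precisely the condition \eqref{eqn:logmgfdomination} of that theorem for this rescaled pair. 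The compressed version \eqref{eqn:pinchedmgf} then yields
\[
\E \tr \exp\Bigl\{\sum\nolimits_j \mat{V}^\star (\theta \mat{X}_j) \mat{V}\Bigr\} \leq \tr \exp\Bigl\{g(\theta) \sum\nolimits_j \mat{V}^\star \mat{A}_j \mat{V}\Bigr\}.
\]
Combining the two displays and then taking the infimum over $\theta > 0$ and the minimum over $\mat{V} \in \Isom{n-k+1}{n}$ produces the stated bound.

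For part (ii), the argument is identical in structure; the only change is that the matrix-valued input to Theorem \ref{thm:mgf} now depends on the isometry $\mat{V}$. This is legitimate because we have fixed $\mat{V}$ before invoking Theorem \ref{thm:mgf}: the hypothesis $\E \e^{\theta \mat{V}^\star \mat{X}_j \mat{V}} \preceq \e^{g(\theta) \mat{A}_j(\mat{V})}$ is exactly condition \eqref{eqn:logmgfdomination} applied to the compressed sequence $\{\theta \mat{V}^\star \mat{X}_j \mat{V}\}$ with dominating matrices $\{g(\theta) \mat{A}_j(\mat{V})\}$, all of which live in $\samats{n-k+1}$. Applying the unpinched form \eqref{eqn:unpinchedmgf} of Theorem \ref{thm:mgf} in this ambient dimension gives the same trace-exponential upper bound, now with $\mat{A}_j(\mat{V})$ in place of $\mat{V}^\star \mat{A}_j \mat{V}$. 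Taking the infimum and minimum at the end closes the argument.

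There is no real obstacle here, since all the work has already been done in Theorems \ref{thm:laplacetform} and \ref{thm:mgf}; the only points that require mild care are (a) verifying that one may legitimately place the infimum over $\theta$ and the minimum over $\mat{V}$ outside of the probability, which follows because the bound holds for every such $\theta$ and $\mat{V}$ separately, and (b) in part (ii), recognizing that the Lieb--Seiringer-based inequality of Theorem \ref{thm:mgf} is really a statement about conditional expectations and hence applies equally well whether the dominating matrices are determined beforehand or are fixed functions of an auxiliary parameter $\mat{V}$.
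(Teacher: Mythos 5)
Your proposal is correct and follows the same path the paper indicates: the paper states only that Theorem~\ref{thm:eigtails} "follows from combining Theorem~\ref{thm:laplacetform} and Theorem~\ref{thm:mgf}," and your argument is precisely that combination carried out in detail, with the right choice of pinched vs.\ unpinched versions of Theorem~\ref{thm:mgf} in parts (i) and (ii). Your remark that $\mat{A}_j(\mat{V})$ must live in $\samats{n-k+1}$ (not $\samats{n}$) to match the dimension of $\mat{V}^\star\mat{X}_j\mat{V}$ is a reasonable reading of what is apparently a small typo in the theorem statement, and your note that fixing $\mat{V}$ before invoking Theorem~\ref{thm:mgf} makes $\mat{A}_j(\mat{V})$ a legitimate deterministic matrix is exactly the point that makes part (ii) go through.
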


The first bound in Theorem \ref{thm:eigtails} requires less detailed information on how compression affects the summands but correspondingly does not give as sharp results as the second. 

In the following two sections, we use the minimax Laplace transform method to derive Chernoff and Bernstein inequalities for the interior eigenvalues of a sum of independent random matrices. Tail bounds for the eigenvalues of matrix Rademacher and Gaussian series, eigenvalue Hoeffding, and matrix martingale eigenvalue tail bounds can all be derived in a similar manner; see \cite{T10a} for relevant details.

\section{ Chernoff bounds}
\label{sec:chernoffbounds}

Classical Chernoff bounds establish that the tails of a sum of independent nonnegative random variables decay subexponentially. \cite{T10a} develops Chernoff bounds for the maximum and minimum eigenvalues of a sum of independent positive-semidefinite matrices. We extend this analysis to study the interior eigenvalues. 

Intuitively, the eigenvalue tail bounds should depend on how concentrated the summands are; e.g., the maximum eigenvalue of a sum of operators whose ranges are aligned is likely to vary more than that of a sum of operators whose ranges are orthogonal. To measure how much a finite sequence of random summands $\{\mat{X}_j\}$ concentrates in a given subspace, we define a function $\randcon : \bigcup_{1 \leq k \leq n} \Isom{k}{n} \rightarrow \R$ that satisfies
\begin{equation}
	\max\nolimits_j \lambdamax{\mat{V}^\star \mat{X}_j \mat{V}} \leq \randcon(\mat{V}) \qquad \text{ almost surely for each } \mat{V} \in \bigcup_{1 \leq k \leq n} \Isom{k}{n}. 
\label{eqn:randcondef}
\end{equation}
The sequence $\{\mat{X}_j\}$ associated with $\randcon$ will always be clear from context. We have the following result.
\begin{thm}[Eigenvalue Chernoff Bounds]
Consider a finite sequence $\{\mat{X}_j\}$ of independent, random, positive-semidefinite matrices with dimension $n.$ Given an integer $k \leq n$, define 
\[
\mu_k = \lambda_k\left(\sum\nolimits_j \E \mat{X}_j\right),
\]
and let $\mat{V}_{+} \in \Isom{n-k+1}{n}$ and $\mat{V}_{-} \in \Isom{k}{n}$ be isometric embeddings that satisfy 
 $$ \mu_k = \lambdamax{\sum\nolimits_j \mat{V}_{+}^\star (\E \mat{X}_j)\mat{V}_{+}} = \lambdamin{\sum\nolimits_j \mat{V}_{-}^\star (\E \mat{X}_j)\mat{V}_{-}}. $$

Then 
\begin{align*}
\Prob{\lambda_k\left( \sum\nolimits_j \mat{X}_j \right) \geq (1+\delta)\mu_k} & \leq (n-k+1) \cdot \left[\frac{\e^\delta}{(1+\delta)^{1+\delta}} \right]^{\mu_k/\randcon(\mat{V}_{+})} 
 & \text{for } \delta > 0, \text{ and} \\ 
\Prob{\lambda_k\left(\sum\nolimits_j \mat{X}_j \right) \leq (1-\delta)\mu_k} & \leq k \cdot \left[\frac{\e^{-\delta}}{(1-\delta)^{1-\delta}}\right]^{\mu_k/\randcon(\mat{V}_{-})} &  \text{for } \delta \in [0,1),
\end{align*}
where $\randcon$ is a function that satisfies \eqref{eqn:randcondef}.
\label{thm:chernoff}
\end{thm}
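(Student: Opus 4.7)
The plan is to invoke Theorem \ref{thm:eigtails}(ii) separately for the upper and lower tails, using a standard Chernoff-style matrix moment generating function (mgf) estimate that exploits the bound $\mat{V}^\star \mat{X}_j \mat{V} \preceq \randcon(\mat{V}) \cdot \mathbf{I}$ for positive-semidefinite summands.

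First, for any $\mat{V}$ with $R = \randcon(\mat{V})$, I would derive the compressed mgf bound
\[
\E \e^{\theta \mat{V}^\star \mat{X}_j \mat{V}} \preceq \exp\bigl(g(\theta) \cdot \E \mat{V}^\star \mat{X}_j \mat{V}\bigr), \qquad g(\theta) = \frac{\e^{\theta R} - 1}{R}, \quad \theta \in \R,
\]
by transferring the elementary scalar inequality $\e^{\theta x} \leq 1 + (\e^{\theta R} - 1)x/R$, valid for $x \in [0, R]$ and any real $\theta$, through the functional calculus to the operator inequality $\e^{\theta \mat{V}^\star \mat{X}_j \mat{V}} \preceq \mathbf{I} + g(\theta) \mat{V}^\star \mat{X}_j \mat{V}$, then taking expectations and applying $\mathbf{I} + \mat{A} \preceq \e^{\mat{A}}$. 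This verifies the hypothesis of Theorem \ref{thm:eigtails}(ii) with $\mat{A}_j(\mat{V}) = \E \mat{V}^\star \mat{X}_j \mat{V}$.

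For the upper tail I would specialize $\mat{V} = \mat{V}_+$ and $t = (1+\delta)\mu_k$. The matrix $g(\theta) \sum_j \mat{V}_+^\star (\E \mat{X}_j) \mat{V}_+$ has dimension $n-k+1$ and, by the defining property of $\mat{V}_+$, maximum eigenvalue $g(\theta)\mu_k$, so its trace exponential is bounded by $(n-k+1)\e^{g(\theta)\mu_k}$. Theorem \ref{thm:eigtails}(ii) then yields
\[
\Prob{\lambda_k\bigl(\sum\nolimits_j \mat{X}_j\bigr) \geq (1+\delta)\mu_k} \leq (n-k+1) \inf_{\theta > 0} \exp\bigl(-\theta(1+\delta)\mu_k + g(\theta)\mu_k\bigr),
\]
and the choice $\theta = R^{-1}\log(1+\delta)$, as in the classical scalar Chernoff argument, collapses the exponent to $\mu_k/R \cdot \log[\e^\delta/(1+\delta)^{1+\delta}]$, producing the stated upper tail bound.

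For the lower tail I would first derive the mirror version of the Laplace transform theorem: starting from the max--min characterization \eqref{eqn:minvariation}, the identity $-\lambda_k(\mat{X}) = \min_{\mat{V} \in \Isom{k}{n}} \lambdamax{-\mat{V}^\star \mat{X}\mat{V}}$ and a sign-reversed repetition of the proof of Theorem \ref{thm:laplacetform} give
\[
\Prob{\lambda_k(\mat{X}) \leq t} \leq \inf_{\theta > 0} \min_{\mat{V} \in \Isom{k}{n}} \bigl\{\e^{\theta t} \cdot \E \tr \e^{-\theta \mat{V}^\star \mat{X}\mat{V}}\bigr\}.
\]
Combining this with the mgf bound above evaluated at $-\theta$ (so that the coefficient becomes $g(-\theta) = (\e^{-\theta R}-1)/R < 0$) and Theorem \ref{thm:mgf}, then specializing $\mat{V} = \mat{V}_-$ (whose $k$-dimensional span is dominant so that $\sum_j \mat{V}_-^\star (\E \mat{X}_j) \mat{V}_- \succeq \mu_k \mathbf{I}$, an inequality that flips after multiplication by the negative scalar $g(-\theta)$) bounds the trace exponential by $k \cdot \e^{g(-\theta)\mu_k}$. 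Setting $\theta = -R^{-1}\log(1-\delta)$ with $R = \randcon(\mat{V}_-)$ and $t = (1-\delta)\mu_k$ reduces the exponent to $\mu_k/R \cdot \log[\e^{-\delta}/(1-\delta)^{1-\delta}]$, yielding the lower tail. The main delicacy is the sign bookkeeping in this last step, in particular tracking how $\mat{V}_-^\star (\sum_j \E \mat{X}_j) \mat{V}_- \succeq \mu_k \mathbf{I}$ flips under multiplication by $g(-\theta) < 0$; beyond this, everything reduces to the scalar Chernoff optimization.
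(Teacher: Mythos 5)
Your proof is correct and follows essentially the same route as the paper: derive a compressed Chernoff-type matrix mgf bound from the bound $\mat{V}^\star\mat{X}_j\mat{V} \preceq \randcon(\mat{V})\,\mathbf{I}$, apply the minimax Laplace transform, bound the trace by dimension times maximum eigenvalue, and optimize $\theta$ as in the scalar Chernoff argument. The only cosmetic differences are that you carry the bound $R$ explicitly rather than normalizing $\randcon = 1$ by homogeneity, and that for the lower tail you derive a ``mirror'' Laplace-transform inequality from the max--min characterization rather than using the paper's negation identity $\lambda_k(\sum_j\mat{X}_j) = -\lambda_{n-k+1}(-\sum_j\mat{X}_j)$ to reduce to the upper-tail machinery; these two routes are algebraically equivalent.
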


Theorem \ref{thm:chernoff} tells us how the tails of the $k$th eigenvalue are controlled by the variation of the random summands in the top and bottom invariant subspaces of $\sum_j \E \mat{X}_j.$ Up to the dimensional factors $k$ and $n-k+1$, the eigenvalues exhibit binomial-type tails. When $k=1$ (respectively, $k=n$) Theorem \ref{thm:chernoff} controls the probability that the largest eigenvalue of the sum is small (respectively, the probability that the smallest eigenvalue of the sum is large), thereby complementing the one-sided Chernoff bounds of \cite{T10a}.


\begin{remark}
If it is difficult to estimate $\randcon(\mat{V}_+)$ or $\randcon(\mat{V}_{-}),$ one can resort to the weaker estimates
\begin{align*}
	\randcon(\mat{V}_{+}) & \leq \max_{\mat{V} \in \Isom{n-k+1}{n}} \max\nolimits_j \norm{\mat{V}^\star \mat{X}_j \mat{V}} = \max\nolimits_j \norm{\mat{X}_j} \\
	\randcon(\mat{V}_{-}) & \leq \max_{\mat{V} \in \Isom{k}{n}} \max\nolimits_j \norm{\mat{V}^\star \mat{X}_j \mat{V}} = \max\nolimits_j \norm{\mat{X}_j}.
\end{align*}
\end{remark}

Theorem \ref{thm:chernoff} follows from Theorem \ref{thm:eigtails} using an appropriate bound on the matrix moment generating functions. The following lemma is due to Ahlswede and Winter \cite{AW02}; see also \cite[Lem.~5.8]{T10a}.

\begin{lemma}
Suppose that $\mat{X}$ is a random positive-semidefinite matrix that satisfies $\lambdamax{\mat{X}} \leq 1.$ Then
$$ \E \e^{\theta \mat{X}} \preceq \exp\left( (\e^\theta - 1) (\E\mat{X}) \right) \quad \text{ for } \theta \in \R. $$
\label{lemma:chernoffmgf}
\end{lemma}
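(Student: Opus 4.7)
The plan is to transfer the classical scalar estimate $e^{\theta x} \le 1 + (e^\theta - 1) x$ (valid for $x \in [0,1]$) to the matrix setting via the spectral theorem, then pass from the affine upper bound back to an exponential one using $1+y \le e^y$.

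First, I would establish the scalar inequality $e^{\theta x} \le 1 + (e^\theta - 1)x$ on the interval $[0,1]$. This is immediate from the convexity of $x \mapsto e^{\theta x}$: on $[0,1]$ the chord from $(0, 1)$ to $(1, e^\theta)$ lies above the graph, so $e^{\theta x} \le (1-x)\cdot 1 + x \cdot e^\theta = 1 + (e^\theta - 1)x$. Because $\mat{X}$ is positive semidefinite with $\lambdamax{\mat{X}} \le 1$, its spectrum lies in $[0,1]$. Applying the functional calculus (if $f \le g$ pointwise on the spectrum of a self-adjoint matrix $\mat{A}$, then $f(\mat{A}) \preceq g(\mat{A})$) yields the matrix inequality
\[
\e^{\theta \mat{X}} \preceq \mat{I} + (\e^\theta - 1)\mat{X}.
\]

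Next, I would take expectations. The semidefinite order is preserved under expectation, since $\vec{u}^\star (\E \mat{Y}) \vec{u} = \E(\vec{u}^\star \mat{Y}\vec{u}) \ge 0$ whenever $\mat{Y} \succeq \mat{0}$ almost surely. Hence
\[
\E \e^{\theta \mat{X}} \preceq \mat{I} + (\e^\theta - 1)\,\E \mat{X}.
\]
To finish, I would invoke the scalar bound $1+y \le e^y$ valid for every $y \in \R$. For any self-adjoint matrix $\mat{Y}$ the spectral theorem then gives $\mat{I} + \mat{Y} \preceq \e^{\mat{Y}}$; taking $\mat{Y} = (\e^\theta - 1)\,\E \mat{X}$ (which is self-adjoint for every real $\theta$) yields
\[
\mat{I} + (\e^\theta - 1)\,\E \mat{X} \preceq \exp\bigl((\e^\theta - 1)\,\E \mat{X}\bigr),
\]
and chaining the two semidefinite inequalities delivers the claim.

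The only subtlety worth flagging is that this argument never requires operator monotonicity or operator convexity of the exponential (which fail in general); both matrix-level steps are applications of the functional calculus to a \emph{single} self-adjoint matrix, where scalar inequalities transfer without difficulty. The identity $\theta \in \R$ (rather than $\theta > 0$) is accommodated automatically, since $e^\theta - 1$ can have either sign while $1+y \le e^y$ holds for all real $y$.
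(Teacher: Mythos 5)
Your proof is correct, and it is essentially the standard argument behind this lemma: the paper does not prove it directly but cites Ahlswede--Winter and \cite[Lem.~5.8]{T10a}, whose proof is exactly your chain of the convexity bound $\e^{\theta x} \leq 1 + (\e^\theta - 1)x$ on $[0,1]$, the transfer rule for the functional calculus, monotonicity of expectation in the semidefinite order, and $\mathbf{I} + \mat{Y} \preceq \e^{\mat{Y}}$. Your closing remark that no operator convexity or operator monotonicity of the exponential is needed is also exactly the right point to flag.
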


\begin{proof}[Proof of Theorem \ref{thm:chernoff}, upper bound]
We consider the case where $\randcon(\mat{V}_{+}) = 1;$ the general case follows by homogeneity.
Define 
$$ \mat{A}_j(\mat{V}_{+}) = \mat{V}_{+}^\star (\E \mat{X}_j) \mat{V}_{+} \quad \text{and}\quad  g(\theta) = \e^\theta - 1. $$
Theorem \ref{thm:eigtails}\eqref{eqn:compressedeigtails} and Lemma \ref{lemma:chernoffmgf} imply that 
\[
\Prob{\lambda_k\left(\sum\nolimits_j \mat{X}_j\right) \geq (1+\delta)\mu_k} \leq \inf_{\theta > 0} \e^{-\theta (1 + \delta)\mu_k} \cdot \tr\exp \left\{g(\theta) \sum\nolimits_j\mat{V}_{+}^\star (\E \mat{X}_j) \mat{V}_{+} \right\}. 
\]
Bound the trace by the maximum eigenvalue, taking into account the reduced dimension of the summands:
\begin{align*}
\tr\exp \left\{g(\theta) \sum\nolimits_j \mat{V}_{+}^\star(\E \mat{X}_j)\mat{V}_{+}\right\} & \leq (n-k+1)\cdot  \lambdamax{\exp\left\{g(\theta) \sum\nolimits_j \mat{V}_{+}^\star(\E \mat{X}_j)\mat{V}_{+}\right\}} \\
 & = (n-k+1) \cdot \exp\left\{g(\theta) \cdot \lambdamax{\sum\nolimits_j\mat{V}_{+}^\star(\E\mat{X}_j)\mat{V}_{+}}\right\}.
\end{align*}
The equality follows from the spectral mapping theorem. Identify the quantity $\mu_k$; then combine the last two inequalities to obtain
\[
 \Prob{\lambda_k\left(\sum\nolimits_j \mat{X}_j \right) \geq (1 + \delta)\mu_k} \leq \\
(n-k+1) \cdot \inf_{\theta >0 } \e^{[g(\theta) -\theta(1 + \delta)] \mu_k }. 
\]
The right-hand side is minimized when $\theta = \log(1+\delta),$ which gives the desired upper tail bound.
\end{proof}

\begin{proof}[Proof of Theorem \ref{thm:chernoff}, lower bound]
As before, we consider the case where $\randcon(\mat{V}_{-}) = 1.$ Clearly,
\begin{equation}
 \Prob{ \lambda_k\left(\sum\nolimits_j \mat{X}_j \right) \leq (1-\delta)\mu_k } = \Prob{ \lambda_{n-k+1} \left(\sum\nolimits_j -\mat{X}_j \right) \geq -(1-\delta)\mu_k }.
 \label{eqn:chernoffeqn1}
\end{equation}
Apply Lemma \ref{lemma:chernoffmgf} to see that, for $\theta > 0,$
\[
\E \e^{\theta (-\mat{V}^\star_- \mat{X}_j \mat{V}_-)} = \E \e^{(-\theta)\mat{V}^\star_- \mat{X}_j \mat{V}_-} \preceq \exp\big(g(\theta)\cdot \mat{V}^\star_-(-\E\mat{X}_j)\mat{V}_-\big), 
\]
where $g(\theta) = 1-\e^{-\theta}.$  
Theorem \ref{thm:eigtails}\eqref{eqn:compressedeigtails} thus implies that the latter probability in \eqref{eqn:chernoffeqn1} is bounded by
\[
  \inf_{\theta > 0} \e^{\theta(1-\delta)\mu_k}\cdot \trexp{ g(\theta) \sum\nolimits_j\mat{V}_{-}^\star (- \E\mat{X}_j) \mat{V}_{-} }.
\]
Using reasoning analogous to that in the proof of the upper bound, we justify the first of the following inequalities:
\begin{align*}
\trexp{ g(\theta) \sum\nolimits_j\mat{V}_{-}^\star (-\E \mat{X}_j) \mat{V}_{-}} & \leq k \cdot \exp\left\{ \lambdamax{ g(\theta)  \sum\nolimits_j\mat{V}_{-}^\star (-\E \mat{X}_j) \mat{V}_{-}} \right\} \\
& = k \cdot \exp\left\{ -g(\theta) \cdot \lambdamin{\sum\nolimits_j \mat{V}_{-}^\star (\E \mat{X}_j) \mat{V}_{-}} \right\} \\
& = k \cdot \exp\left\{ -g(\theta) \mu_k \right\}.
\end{align*}
The remaining equalities follow from the fact that $-g(\theta) <0$ and the definition of $\mu_k.$

This argument establishes the bound
\[
\Prob{\lambda_k\left(\sum\nolimits_j \mat{X}_j \right) \leq (1 - \delta)\mu_k} \leq k \cdot \inf_{\theta > 0} \e^{[\theta(1 - \delta) - g(\theta)]\mu_k}.
\] 
The right-hand side is minimized when $\theta = -\log(1-\delta),$ which gives the desired lower tail bound.
\end{proof}

\section{Bennett and Bernstein inequalities}
\label{sec:bernsteinbounds}

The classical Bennett and Bernstein inequalities use the variance or knowledge of the moments of the summands to control the probability that a sum of independent random variables deviates from its mean. In \cite{T10a}, matrix Bennett and Bernstein inequalities are developed for the extreme eigenvalues of self-adjoint random matrix sums. We establish that the interior eigenvalues satisfy analogous inequalities.

As in the derivation of the Chernoff inequalities of section \ref{sec:chernoffbounds}, we need a measure of how concentrated the random summands are in a given subspace. Recall that the function $\randcon : \bigcup_{1 \leq k \leq n} \Isom{k}{n} \rightarrow \R$ satisfies
\begin{equation}
	\max\nolimits_j \lambdamax{\mat{V}^\star \mat{X}_j \mat{V}} \leq \randcon(\mat{V}) \qquad \text{ almost surely for each } \mat{V} \in \bigcup_{1 \leq k \leq n} \Isom{k}{n}. 
\label{eqn:randcondef2}
\end{equation}
The sequence $\{\mat{X}_j\}$ associated with $\randcon$ will always be clear from context.

\begin{thm}[Eigenvalue Bennett Inequality]
\label{thm:bennett}
Consider a finite sequence $\{\mat{X}_j\}$ of independent, random, self-adjoint matrices with dimension $n$, all of which have zero mean. Given an integer $k \leq n$, define
\[
 \sigma_k^2 = \lambda_k\left(\sum\nolimits_j \E(\mat{X}_j^2) \right). 
\]
Choose $\mat{V}_{+}\in \Isom{n-k+1}{n}$ to satisfy
\[
 \sigma_k^2 = \lambdamax{\sum\nolimits_j \mat{V}_{+}^\star\E(\mat{X}_j^2)\mat{V}_{+}}.
\]
Then, for all $t \geq 0,$
\begin{align}
\Prob{\lambda_k\left( \sum\nolimits_j \mat{X}_j \right) \geq t } & \leq (n-k+1) \cdot \exp\left\{-\frac{\sigma_k^2}{\randcon(\mat{V}_{+})^2} \cdot h\left(\frac{\randcon(\mat{V}_{+})t}{\sigma_k^2}\right) \right\} \tag{i} \label{eqn:bennettineq} \\
 & \leq (n-k+1) \cdot \exp\left\{ \frac{-t^2/2}{\sigma_k^2 + \randcon(\mat{V}_+)t/3} \right\} \tag{ii} \label{eqn:bernsteinineq} \\
 & \leq \begin{cases}[1.5]
        (n-k+1) \cdot \exp\left\{-\tfrac{3}{8}t^2/\sigma_k^2\right\}  & \text{ for } t \leq \sigma_k^2/\randcon(\mat{V}_+) \\
        (n-k+1) \cdot \exp\left\{-\tfrac{3}{8} t/\randcon(\mat{V}_+)\right\} & \text{ for } t \geq \sigma_k^2/\randcon(\mat{V}_+),  
        \end{cases} \tag{iii}  \label{eqn:splitbernsteinineqs} 
\end{align}
where the function $h(u) = (1+u)\log(1+u) - u$ for $u \geq 0.$ The function $\randcon$ satisfies \eqref{eqn:randcondef2} above.
\end{thm}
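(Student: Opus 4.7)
My proof follows the blueprint of Theorem \ref{thm:chernoff}, replacing the Chernoff-type matrix moment generating function bound with its Bennett-type analog. The key ingredient, from \cite{T10a}, is the statement that a zero-mean random self-adjoint matrix $\mat{Y}$ with $\lambdamax{\mat{Y}} \leq 1$ almost surely satisfies
\[
\E \e^{\theta \mat{Y}} \preceq \exp\left\{ (\e^\theta - \theta - 1) \E(\mat{Y}^2) \right\} \quad \text{for } \theta > 0.
\]
By homogeneity I may assume $\randcon(\mat{V}_+) = 1$ throughout. Each compressed summand $\mat{V}_+^\star \mat{X}_j \mat{V}_+$ is then zero-mean self-adjoint with $\lambdamax \leq 1$, so the lemma supplies the hypothesis of Theorem \ref{thm:eigtails}\eqref{eqn:compressedeigtails} with $g(\theta) = \e^\theta - \theta - 1$ and $\mat{A}_j(\mat{V}_+) = \E\big((\mat{V}_+^\star \mat{X}_j \mat{V}_+)^2\big)$.

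Applying Theorem \ref{thm:eigtails}\eqref{eqn:compressedeigtails}, bounding the trace exponential by $(n-k+1)$ times its largest eigenvalue, and invoking the spectral mapping theorem reduces the problem to controlling $\lambdamax{\sum_j \E\big((\mat{V}_+^\star \mat{X}_j \mat{V}_+)^2\big)}$. The operator compression inequality $(\mat{V}_+^\star \mat{X}_j \mat{V}_+)^2 \preceq \mat{V}_+^\star \mat{X}_j^2 \mat{V}_+$, which holds because $\mat{V}_+ \mat{V}_+^\star \preceq \mathbf{I}$, combined with the defining property of $\mat{V}_+$, bounds this eigenvalue by $\sigma_k^2$. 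Collecting the pieces gives
\[
\Prob{\lambda_k\left(\sum\nolimits_j \mat{X}_j \right) \geq t} \leq (n-k+1) \cdot \inf_{\theta > 0} \exp\left\{(\e^\theta - \theta - 1)\sigma_k^2 - \theta t \right\},
\]
and the infimum is attained at $\theta = \log(1 + t/\sigma_k^2)$, yielding the Bennett exponent $-\sigma_k^2 \cdot h(t/\sigma_k^2)$. Restoring $\randcon(\mat{V}_+)$ by homogeneity produces the stated bound \eqref{eqn:bennettineq}.

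Inequalities \eqref{eqn:bernsteinineq} and \eqref{eqn:splitbernsteinineqs} are purely scalar consequences of \eqref{eqn:bennettineq}: substituting the familiar estimate $h(u) \geq u^2/(2 + 2u/3)$ valid for $u \geq 0$ produces the Bernstein form, and splitting on whether $\randcon(\mat{V}_+) t \leq \sigma_k^2$ (so that one of the two terms in the denominator of \eqref{eqn:bernsteinineq} dominates) yields the piecewise exponent with constant $3/8$. The main subtlety in the argument is structural rather than computational: since the matrix exponential is not operator monotone, I cannot directly replace $\E\big((\mat{V}_+^\star \mat{X}_j \mat{V}_+)^2\big)$ by $\mat{V}_+^\star \E(\mat{X}_j^2) \mat{V}_+$ inside the MGF hypothesis. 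The compression inequality must be deferred until after the trace bound, where monotonicity of the trace exponential allows the substitution.
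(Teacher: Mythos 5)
Your proof is correct, and it takes a route that diverges from the paper's in a way worth noting. The paper applies Lemma~\ref{lemma:freedmanmgf} to the \emph{uncompressed} summands $\mat{X}_j$ and then invokes Theorem~\ref{thm:eigtails}\eqref{eqn:uncompressedeigtails} with $\mat{A}_j = \E(\mat{X}_j^2)$. To do so, the paper asserts that the normalization $\randcon(\mat{V}_+) = 1$ implies $\lambdamax{\mat{X}_j} \leq 1$ almost surely. That implication is not valid in general, since the constraint \eqref{eqn:randcondef2} only controls $\lambdamax{\mat{V}_+^\star \mat{X}_j \mat{V}_+}$, and a compression can have much smaller top eigenvalue than the full matrix; the paper's argument as written proves the theorem only for the trivial choice $\randcon(\mat{V}) \equiv \max_j \norm{\mat{X}_j}$, which is weaker than the stated result and weaker than what is used in section~\ref{sec:colsubsampling}.

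Your route sidesteps this: applying Lemma~\ref{lemma:freedmanmgf} to the \emph{compressed} summands $\mat{V}_+^\star \mat{X}_j \mat{V}_+$ only requires $\lambdamax{\mat{V}_+^\star \mat{X}_j \mat{V}_+} \leq 1$, which is exactly what $\randcon(\mat{V}_+) = 1$ gives. The price is that you then face $\E\bigl((\mat{V}_+^\star \mat{X}_j \mat{V}_+)^2\bigr)$ instead of $\mat{V}_+^\star \E(\mat{X}_j^2) \mat{V}_+$, and you correctly pay it with the compression inequality $(\mat{V}_+^\star \mat{X}_j \mat{V}_+)^2 \preceq \mat{V}_+^\star \mat{X}_j^2 \mat{V}_+$ (which follows from $\mat{V}_+^\star \mat{X}_j (\mathbf{I} - \mat{V}_+\mat{V}_+^\star) \mat{X}_j \mat{V}_+ \succeq 0$), deferred until after the trace exponential is replaced by $(n-k+1)\lambdamax{\cdot}$, so that only monotonicity of $\lambdamax$ (or of $\tr\exp$) under the semidefinite order is needed rather than operator monotonicity of the exponential. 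Your observation about why the substitution cannot happen inside the MGF hypothesis is precisely the right structural point. The optimization, the scalar estimate $h(u) \geq \tfrac{u^2/2}{1 + u/3}$, and the case split all match the paper. Net assessment: your argument is not just an alternative but a repair, establishing the theorem at the full claimed generality with a genuinely $\mat{V}$-dependent $\randcon$.
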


Results \eqref{eqn:bennettineq} and \eqref{eqn:bernsteinineq} are, respectively, matrix analogs of the classical Bennett and Bernstein inequalities. As in the scalar case, the Bennett inequality reflects a Poisson-type decay in the tails of the eigenvalues. The Bernstein inequality states that small deviations from the eigenvalues of the expected matrix are roughly normally distributed while larger deviations are subexponential. The split Bernstein inequalities \eqref{eqn:splitbernsteinineqs} make explicit the division between these two regimes.

As stated, Theorem \ref{thm:bennett} estimates the probability that the eigenvalues of a sum are large. Using the identity
\[
\lambda_k\left(\sum_j \mat{X}_j\right) = -\lambda_{n-k+1}\left(- \sum_j \mat{X}_j\right),
\]
Theorem \ref{thm:bennett} can be applied to estimate the probability that eigenvalues of a sum are small. 

To prove Theorem \ref{thm:bennett}, we use the following lemma (Lemma 6.7  in \cite{T10a}) to control the moment generating function of a random matrix with bounded maximum eigenvalue.
 
\begin{lemma}
Let $\mat{X}$ be a random self-adjoint matrix satisfying $\E\mat{X} = \mat{0}$ and $\lambdamax{\mat{X}} \leq 1$ almost surely. Then
\[ 
\E\e^{\theta\mat{X}} \preceq \exp((\e^{\theta} - \theta - 1) \cdot \E(\mat{X}^2)) \quad \text{for } \theta > 0. 
\]
\label{lemma:freedmanmgf}
\end{lemma}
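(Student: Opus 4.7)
The plan is to mirror the classical scalar argument for the Bernstein moment generating function bound, lifting each step to the self-adjoint operator setting by way of the functional calculus and an elementary operator monotonicity principle.

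The starting point is the scalar identity $\e^{\theta x} = 1 + \theta x + x^2 \cdot f(x)$, where
\[
 f(x) = \frac{\e^{\theta x} - \theta x - 1}{x^2},
\]
extended by continuity to $f(0) = \theta^2/2$. The first step is a scalar calculus lemma: for each $\theta > 0$, the function $f$ is monotone nondecreasing on all of $\R$. I would verify this by differentiating and showing $f'(x) \geq 0$; the derivative simplifies to an expression whose sign can be read off from the convexity of $\e^{\theta x}$, and this is the kind of one-line calculus check that I would not belabor.

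Next, I would transfer this scalar bound into a semidefinite inequality. Since $\lambdamax{\mat{X}} \leq 1$ almost surely, the spectrum of $\mat{X}$ is contained in $(-\infty, 1]$, so the functional calculus combined with monotonicity of $f$ yields $f(\mat{X}) \preceq f(1) \cdot \mathbf{I} = (\e^\theta - \theta - 1) \mathbf{I}$. Because $\mat{X}^2$ and $f(\mat{X})$ are both functions of $\mat{X}$ they commute, and $\mat{X}^2$ is positive semidefinite, so multiplying the previous inequality by $\mat{X}^2$ preserves the ordering:
\[
 \mat{X}^2 f(\mat{X}) \preceq (\e^\theta - \theta - 1) \mat{X}^2.
\]
Applying the operator identity $\e^{\theta \mat{X}} = \mathbf{I} + \theta \mat{X} + \mat{X}^2 f(\mat{X})$ (immediate from the scalar identity via functional calculus) gives
\[
 \e^{\theta \mat{X}} \preceq \mathbf{I} + \theta \mat{X} + (\e^\theta - \theta - 1) \mat{X}^2.
\]

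The remaining steps are bookkeeping. I would take expectations, use the hypothesis $\E \mat{X} = \mat{0}$ to cancel the linear term, and conclude that $\E \e^{\theta \mat{X}} \preceq \mathbf{I} + (\e^\theta - \theta - 1) \E(\mat{X}^2)$. Finally, the scalar inequality $1 + a \leq \e^a$ transfers via functional calculus to the operator inequality $\mathbf{I} + \mat{A} \preceq \e^{\mat{A}}$ valid for every self-adjoint $\mat{A}$, applied to $\mat{A} = (\e^\theta - \theta - 1) \E(\mat{X}^2)$, which yields the claimed bound.

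The main subtlety to get right is the commutativity step that lets me multiply the operator inequality $f(\mat{X}) \preceq (\e^\theta - \theta - 1) \mathbf{I}$ by $\mat{X}^2$: in general, semidefinite inequalities are not preserved under multiplication by PSD matrices, but here $\mat{X}^2$ commutes with $f(\mat{X})$ since both are functions of $\mat{X}$, so the product of the commuting PSD factors $\mat{X}^2$ and $(\e^\theta - \theta - 1) \mathbf{I} - f(\mat{X})$ is again PSD. Apart from that, the only other place requiring care is the use of $\lambdamax{\mat{X}} \leq 1$ to guarantee that the evaluation $f(1)$ lies to the right of the spectrum of $\mat{X}$, so that monotonicity of $f$ gives the correct sign in the semidefinite comparison.
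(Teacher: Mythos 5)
Your proof is correct, and it is essentially the same argument as the one behind the paper's citation: the paper does not prove this lemma itself but invokes Lemma 6.7 of \cite{T10a}, whose proof runs exactly through the nondecreasing function $f(x) = (\e^{\theta x}-\theta x - 1)/x^2$, the spectral bound $f(\mat{X}) \preceq f(1)\,\mathbf{I}$, expectation with $\E\mat{X}=\mat{0}$, and the transfer of $1+a \leq \e^a$ to matrices. The only step you wave at, monotonicity of $f$, is genuine but standard (cleanest via $f(x) = \theta^2\int_0^1 (1-s)\e^{s\theta x}\,ds$), so nothing is missing.
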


\begin{proof}[Proof of Theorem \ref{thm:bennett}]
Using homogeneity, we assume without loss that $\randcon(\mat{V}_+)=1.$ This implies that $\lambdamax{\mat{X}_j} \leq 1$ almost surely for all the summands. By Lemma \ref{lemma:freedmanmgf},
\[
\E\e^{\theta \mat{X}_j} \preceq \exp\big( g(\theta) \cdot \E(\mat{X}_j^2) \big),
\]
with $g(\theta) = \e^\theta - \theta - 1.$ 

Theorem \ref{thm:eigtails}\eqref{eqn:uncompressedeigtails} then implies
\begin{align*}
\Prob{\lambda_k\left(\sum\nolimits_j \mat{X}_j \right) \geq t } & \leq \inf_{\theta > 0} \e^{-\theta t} \cdot \trexp{g(\theta) \sum\nolimits_j \mat{V}_{+}^\star\E (\mat{X}_j^2)\mat{V}_{+}} \\
 & \leq (n-k+1) \cdot \inf_{\theta >0} \e^{-\theta t} \cdot \lambdamax{\exp\left\{g(\theta)\sum\nolimits_j \mat{V}_{+}^\star\E(\mat{X}_j^2)\mat{V}_{+} \right\}} \\
& = (n-k+1) \cdot \inf_{\theta > 0} e^{-\theta t} \cdot \exp\left\{ g(\theta) \cdot \lambdamax{\sum\nolimits_j \mat{V}_{+}^\star\E(\mat{X}_j^2)\mat{V}_{+} } \right\}.
\end{align*}
The maximum eigenvalue in this expression equals $\sigma_k^2$, thus
\[ 
\Prob{\lambda_k\left(\sum\nolimits_j \mat{X}_j \right) \geq t} \leq (n-k+1) \cdot \inf_{\theta >0 } \e^{g(\theta) \sigma_k^2 -\theta t}.
\]
The Bennett inequality \eqref{eqn:bennettineq} follows by substituting $\theta = \log(1 + t/\sigma_k^2)$ into the right-hand side and simplifying.

The Bernstein inequality \eqref{eqn:bernsteinineq} is a consequence of \eqref{eqn:bennettineq} and the fact that 
\[
 h(u) \geq \frac{u^2/2}{1 + u/3} \quad \text{ for } u \geq 0,
\]
which can be established by comparing derivatives.

The subgaussian and subexponential portions of the split Bernstein inequalities \eqref{eqn:splitbernsteinineqs} are verified through algebraic comparisons on the relevant intervals.
\end{proof}

Occasionally, as in the application in section \ref{sec:covarianceest} to the problem of covariance matrix estimation, one desires a Bernstein-type tail bound that applies to summands that do not have bounded maximum eigenvalues. In this case, if the moments of the summands satisfy sufficiently strong growth restrictions, one can extend classical scalar arguments to obtain results such as the following Bernstein bound for subexponential matrices. 

\begin{thm}[Eigenvalue Bernstein Inequality for Subexponential Matrices]
Consider a finite sequence $\{\mat{X}_j\}$ of independent, random, self-adjoint matrices with dimension $n$, all of which satisfy the subexponential moment growth condition
\[
\E (\mat{X}_j^m) \preceq \frac{m!}{2} B^{m-2} \mat{\Sigma}_j^2 \quad \text{ for } m=2,3,4,\ldots,
\]
where $B$ is a positive constant and $\mat{\Sigma}_j^2$ are positive-semidefinite matrices. Given an integer $k \leq n$, set
\[
\mu_k = \lambda_k \left( \sum\nolimits_j \E \mat{X}_j \right).
\]
Choose $\mat{V}_+ \in \Isom{n-k+1}{n}$ that satisfies
\[
\mu_k = \lambdamax{ \sum\nolimits_j \mat{V}_+^\star (\E \mat{X}_j) \mat{V}_+ },
\]
and define
\[
 \quad \sigma_k^2 = \lambdamax{ \sum\nolimits_j \mat{V}_+^\star \mat{\Sigma}_j^2 \mat{V}_+ }.
\] 
Then, for any $t \geq 0,$
\begin{align}
\Prob{\lambda_k \left( \sum\nolimits_j \mat{X}_j \right) \geq \mu_k + t } & \leq (n-k+1) \cdot \exp\left\{ - \frac{t^2/2}{\sigma_k^2 + B t}\right\} \tag{i} \label{eqn:subexponentialbernstein} \\
 & \leq \begin{cases}[1.5]
         (n-k+1) \cdot \exp\left\{ -\tfrac{1}{4} t^2/\sigma_k^2\right\} & \text{ for } t \leq \sigma_k^2/B \\
	 (n-k+1) \cdot \exp\left\{ -\tfrac{1}{4} t/B\right\} & \text{ for } t \geq \sigma_k^2/B.
        \end{cases} \tag{ii} \label{eqn:splitsubexponentialbernstein}
\end{align}
\label{thm:subexponentialbernstein}
\end{thm}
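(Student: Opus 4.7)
The plan is to parallel the proof of Theorem 5.1 (the matrix Bennett/Bernstein inequality), with the new subexponential moment growth condition replacing the bounded-eigenvalue hypothesis. The first step is to convert the polynomial-growth bound on $\E(\mat{X}_j^m)$ into a matrix-moment-generating-function bound. Expanding the exponential and using the hypothesis term-by-term, one obtains, for any $\theta \in (0,1/B)$, the semidefinite estimate
\[
\E \e^{\theta \mat{X}_j} \;\preceq\; \mat{I} + \theta \E\mat{X}_j + \sum_{m \geq 2} \frac{\theta^m}{m!} \cdot \frac{m!}{2} B^{m-2} \mat{\Sigma}_j^2 \;=\; \mat{I} + \theta\E\mat{X}_j + \frac{\theta^2/2}{1-\theta B}\mat{\Sigma}_j^2,
\]
and then the inequality $\mat{I} + \mat{M} \preceq \e^{\mat{M}}$ (valid for self-adjoint $\mat{M}$) yields
\[
\E \e^{\theta \mat{X}_j} \;\preceq\; \exp\!\left( \theta\,\E\mat{X}_j + \frac{\theta^2/2}{1-\theta B}\,\mat{\Sigma}_j^2 \right).
\]

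The main obstacle is that this bound does not factor into the form $\e^{g(\theta)\mat{A}_j}$ required by the clean statement of Theorem~\ref{thm:eigtails}, because the mean and variance-proxy terms must be carried simultaneously. To handle this, I would not invoke Theorem~\ref{thm:eigtails} directly but instead apply Theorem~\ref{thm:laplacetform} and Theorem~\ref{thm:mgf} at the level of the raw MGF bound above. Concretely, Theorem~\ref{thm:laplacetform} gives, for any $\mat{V} \in \Isom{n-k+1}{n}$,
\[
\Prob{\lambda_k\!\left(\sum\nolimits_j \mat{X}_j\right) \geq \mu_k + t} \;\leq\; \e^{-\theta(\mu_k + t)}\cdot \E\tr\exp\!\left(\theta\sum\nolimits_j \mat{V}^\star \mat{X}_j \mat{V}\right),
\]
and Theorem~\ref{thm:mgf}, applied with $\mat{A}_j = \theta \E\mat{X}_j + \tfrac{\theta^2/2}{1-\theta B}\mat{\Sigma}_j^2$, allows the expected trace exponential to be replaced by
\[
\tr\exp\!\left(\theta\sum\nolimits_j \mat{V}^\star (\E\mat{X}_j)\mat{V} + \frac{\theta^2/2}{1-\theta B}\sum\nolimits_j \mat{V}^\star \mat{\Sigma}_j^2 \mat{V}\right).
\]
Specializing to $\mat{V} = \mat{V}_+$, then bounding the trace by the dimension $n-k+1$ times the maximum eigenvalue and applying Weyl's inequality to split the maximum eigenvalue across the two summands, one arrives at the estimate
\[
\Prob{\lambda_k\!\left(\sum\nolimits_j \mat{X}_j\right) \geq \mu_k + t} \;\leq\; (n-k+1)\cdot \exp\!\left(-\theta t + \frac{\theta^2/2}{1-\theta B}\,\sigma_k^2\right),
\]
where $\theta \E\mat{X}_j$ contributes $\theta \mu_k$ on the $\mat{V}_+$-compression and cancels the $\e^{-\theta\mu_k}$ prefactor.

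The final step is the classical Bernstein optimization over $\theta \in (0,1/B)$. Choosing $\theta = t/(\sigma_k^2 + Bt)$ (which lies in the admissible range since $\theta B < 1$) makes $1 - \theta B = \sigma_k^2/(\sigma_k^2 + Bt)$, and a short algebraic simplification reduces the exponent to $-t^2/(2(\sigma_k^2 + Bt))$; this establishes \eqref{eqn:subexponentialbernstein}. The split bound \eqref{eqn:splitsubexponentialbernstein} then follows by comparing the two terms in the denominator: when $t \leq \sigma_k^2/B$, the denominator is at most $2\sigma_k^2$, giving the subgaussian regime $\tfrac14 t^2/\sigma_k^2$; when $t \geq \sigma_k^2/B$, it is at most $2Bt$, giving the subexponential regime $\tfrac14 t/B$. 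The only delicate point throughout is the admissibility constraint $\theta B < 1$, which one must monitor to ensure the series expansion of the MGF converges and the resulting bound is finite.
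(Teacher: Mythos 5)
Your proof is correct and follows essentially the same path as the paper's: derive the semidefinite MGF bound from the moment growth condition via series expansion, apply the minimax Laplace transform machinery to a fixed compression $\mat{V}_+$, bound the trace by $(n-k+1)$ times the maximum eigenvalue, split the exponent via subadditivity of $\lambdamax$ so the $\theta\mu_k$ term cancels the $\e^{-\theta\mu_k}$ prefactor, and optimize with $\theta = t/(\sigma_k^2 + Bt)$. The paper handles the general $B$ by rescaling to $B=1$ and cites the MGF estimate as a standalone lemma (Lemma~\ref{lemma:growthbernsteinmgf}), whereas you carry $B$ throughout and rederive the estimate inline, but the content is identical. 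One small point in your favor: you correctly observe that the MGF bound $\exp(\theta\E\mat{X}_j + g(\theta)\mat{\Sigma}_j^2)$ does not literally match the hypothesis $\E\e^{\theta\mat{X}_j}\preceq\e^{g(\theta)\mat{A}_j}$ of Theorem~\ref{thm:eigtails}(i), since there are two distinct matrix coefficients with different $\theta$-dependence; the right move, which you take, is to invoke Theorem~\ref{thm:laplacetform} and Theorem~\ref{thm:mgf} directly with $\mat{A}_j = \theta\E\mat{X}_j + g(\theta)\mat{\Sigma}_j^2$. The paper nominally cites Theorem~\ref{thm:eigtails}(i) but in fact performs this same direct combination in its display, so you are making explicit a step the paper leaves slightly implicit.
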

 
This result is an extension of \cite[Theorem 6.2]{T10a}, which, in turn, generalizes a classical scalar argument \cite{DG99}.

As with the other matrix inequalities, Theorem \ref{thm:subexponentialbernstein} follows from an application of Theorem \ref{thm:eigtails} and appropriate semidefinite bounds on the moment generating functions of the summands. Thus, the key to the proof lies in exploiting the moment growth conditions of the summands to majorize their moment generating functions. The following lemma, a trivial extension of Lemma 6.8 in \cite{T10a}, provides what we need.

\begin{lemma}
Let $\mat{X}$ be a random self-adjoint matrix satisfying the subexponential moment growth conditions
\[
\E (\mat{X}^m) \preceq \frac{m!}{2} \mat{\Sigma}^2 \quad \text{for } m=2,3,4,\ldots.
\]
Then, for any $\theta$ in $[0,1),$
\[
 \E \exp(\theta \mat{X}) \preceq \exp\left( \theta \E \mat{X} + \frac{\theta^2}{2(1 - \theta)} \mat{\Sigma}^2 \right).
\]
\label{lemma:growthbernsteinmgf}
\end{lemma}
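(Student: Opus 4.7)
The plan is to follow the standard scalar Bernstein-type argument, carefully verifying that each step remains valid in the semidefinite ordering. First, I expand the matrix exponential as a power series:
\[
\E \e^{\theta \mat{X}} = \mat{I} + \theta \E \mat{X} + \sum_{m \geq 2} \frac{\theta^m}{m!} \E(\mat{X}^m).
\]
Since the semidefinite order is preserved under nonnegative scalar multiplication and convergent sums, the hypothesis $\E(\mat{X}^m) \preceq (m!/2)\mat{\Sigma}^2$ for $m \geq 2$ lets me majorize the tail of the series term by term.

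Next I sum the resulting geometric series. For $\theta \in [0,1)$,
\[
\sum_{m \geq 2} \frac{\theta^m}{m!} \cdot \frac{m!}{2} \mat{\Sigma}^2 = \frac{1}{2} \left( \sum_{m \geq 2} \theta^m \right) \mat{\Sigma}^2 = \frac{\theta^2}{2(1-\theta)} \mat{\Sigma}^2.
\]
Combining this with the previous display gives
\[
\E \e^{\theta \mat{X}} \preceq \mat{I} + \theta \E \mat{X} + \frac{\theta^2}{2(1-\theta)} \mat{\Sigma}^2.
\]

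To finish, I invoke the standard operator inequality $\mat{I} + \mat{A} \preceq \e^{\mat{A}}$, which holds for every self-adjoint $\mat{A}$ as an immediate consequence of the scalar inequality $1+x \leq \e^x$ together with the spectral theorem. Applying this with $\mat{A} = \theta \E \mat{X} + \frac{\theta^2}{2(1-\theta)} \mat{\Sigma}^2$ yields the claimed bound.

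The only nontrivial point is justifying the use of $\mat{I} + \mat{A} \preceq \e^{\mat{A}}$ on the sum rather than on each piece separately; this is legitimate because the right-hand side of the penultimate display is an ordinary self-adjoint matrix (the convergence of the operator series is guaranteed by the bound $\sum_m \theta^m \snorm{\mat{\Sigma}^2}/2 < \infty$ for $\theta \in [0,1)$), so the scalar-to-matrix lift of $1+x \leq \e^x$ applies directly. I expect no other obstacles: the hypothesis is tailored precisely so that the factorials cancel and the remaining geometric series converges on $[0,1)$, exactly matching the domain stated in the lemma.
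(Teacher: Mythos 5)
Your proof is correct and matches the standard argument that the paper invokes (it cites this as a trivial extension of \cite[Lem.~6.8]{T10a}, whose proof is exactly the expand--dominate--resum--exponentiate chain you give): expand $\E\e^{\theta\mat{X}}$ as a power series, bound the $m\geq 2$ terms in the semidefinite order using the moment hypothesis and the nonnegativity of $\theta^m/m!$, sum the resulting geometric series, and finish with the transfer of $1+x \le \e^x$ to self-adjoint matrices via the spectral theorem. The only point worth tightening in the write-up is the justification of interchanging $\E$ with the infinite sum: the convergence you cite is for the majorizing series $\sum_m \theta^m \snorm{\mat{\Sigma}^2}/2$, while the interchange on the left-hand side requires noting that the moment hypothesis (applied to even $m$, then Lyapunov for odd $m$) forces $\sum_m \frac{\theta^m}{m!}\E\snorm{\mat{X}}^m < \infty$; once that is in hand, the term-by-term domination passes to the limit because the positive-semidefinite cone is closed.
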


\begin{proof}[Proof of Theorem \ref{thm:subexponentialbernstein}]

We note that $\mat{X}_j$ satisfies the growth condition
\[
 \E (\mat{X}_j^m) \preceq \frac{m!}{2} B^{m-2} \mat{\Sigma}_j^2 \quad \text{for } m \geq 2 
\]
if and only if the scaled matrix $\mat{X}_j/B$ satisfies
\[
 \E \left(\frac{\mat{X}_j}{B}\right)^m \preceq \frac{m!}{2} \cdot \frac{\mat{\Sigma}_j^2}{B^2} \quad \text{for } m \geq 2.
\]
Thus, by rescaling, it suffices to consider the case $B = 1.$ We now do so.

By Lemma \ref{lemma:growthbernsteinmgf}, the moment generating functions of the summands satisfy  
\[
 \E \exp(\theta \mat{X}_j) \preceq \exp\left(\theta \E \mat{X}_j + g(\theta) \mat{\Sigma}_j^2 \right),
\]
where $g(\theta) = \theta^2/(2 -2 \theta).$ Now we apply Theorem \ref{thm:eigtails}\eqref{eqn:uncompressedeigtails}:
\begin{align*}
 \Prob{\lambda_k\left(\sum\nolimits_j \mat{X}_j\right) \geq \mu_k + t} & \leq \inf_{\theta \in [0,1)} \e^{-\theta(\mu_k + t)} \cdot \trexp{ \theta \sum\nolimits_j \mat{V}_+^\star (\E\mat{X}_j) \mat{V}_+ + g(\theta) \sum\nolimits_j \mat{V}_+^\star \mat{\Sigma}_j^2 \mat{V}_+ } \\
& \leq \inf_{\theta \in [0,1)} (n -k + 1)\cdot \exp\Big\{-\theta(\mu_k + t) + \theta \cdot \lambdamax{\sum\nolimits_j \mat{V}_+^\star (\E\mat{X}_j) \mat{V}_+}   \\
&  \qquad {} + g(\theta) \cdot \lambdamax{\sum\nolimits_j \mat{V}_+^\star \mat{\Sigma}_j^2 \mat{V}_+} \Big\} \\
 & = \inf_{\theta \in [0, 1)} (n-k+1)\cdot \exp\left( -\theta t + g(\theta) \sigma_k^2 \right).
\end{align*}
To achieve the final simplification, we identified $\mu_k$ and $\sigma_k^2.$
Now, select $\theta = t/(t + \sigma_k^2).$ Then simplication gives the Bernstein inequality \eqref{eqn:subexponentialbernstein}. 

Algebraic comparisons on the relevant intervals yield the split Bernstein inequalities \eqref{eqn:splitsubexponentialbernstein}.
\end{proof}

\section{An application to column subsampling}
\label{sec:colsubsampling}

As an application of our Chernoff bounds, we examine how sampling columns from a matrix with orthonormal rows affects the spectrum. This question has applications in numerical linear algebra and compressed sensing. The special cases of the maximum and minimum eigenvalues have been studied in the literature \cite{Tropp08, RV07}. The limiting spectral distributions of matrices formed by sampling columns from similarly structured matrices have also been studied: the results of \cite{GH09} apply to matrices formed by sampling columns from any fixed orthogonal matrix, and \cite{F10} studies matrices formed by sampling columns and rows from the discrete Fourier transform matrix.  We mention in particular \cite{RU99}, the main result of which provides a uniform bound on the tails of all singular values of the sampled matrix. The theorem proven in this section provides bounds which reflect the differences in the tails of the individual singular values, and thus can be viewed as an elaboration of the result in \cite{RU99}.

Let $\mat{U}$ be an $n \times r$ matrix with orthonormal rows. We model the sampling operation using a random diagonal matrix $\mat{D}$ whose entries are independent $\text{Bern}(p)$ random variables. Then the random matrix
\begin{equation}
\widehat{\mat{U}} = \mat{U}\mat{D}
\label{eqn:colsubsampling}
\end{equation}
can be interpreted as a random column submatrix of $\mat{U}$ with an average of $p r$ nonzero columns. Our goal is to study the behavior of the spectrum of $\widehat{\mat{U}}.$

Recall that the $j$th column of $\mat{U}$ is written $\vec{u}_j.$ Consider the following coherence-like quantity associated with $\mat{U}:$
\begin{equation}
 \tau_k  =  \min_{\mat{V} \in \Isom{k}{n}} \max\nolimits_j \norm{\mat{V}^\star \vec{u}_j}^2 \quad \text{for } k=1,\ldots,n. 
 \label{eqn:coherencelikequantity}
\end{equation}
There does not seem to be a simple expression for $\tau_k.$ However, by choosing $\mat{V}^\star$ to be the restriction to an appropriate $k$-dimensional coordinate subspace, we see that $\tau_k$ always satisfies
\[
 \tau_k \leq \min_{|I|\leq k} \max\nolimits_j \sum_{i \in I} u_{ij}^2. 
\]

The following theorem shows that the behavior of $\s_k(\widehat{\mat{U}}),$ the $k$th singular value of $\widehat{\mat{U}},$ can be explained in terms of $\tau_k.$
\begin{thm}[Column Subsampling of Matrices with Orthonormal Rows]
\label{thm:colsampling}
Let $\mat{U}$ be an $n \times r$ matrix with orthonormal rows, and let $p$ be a sampling probability. Define the sampled matrix $\widehat{\mat{U}}$ according to \eqref{eqn:colsubsampling}, and the numbers $\{\tau_k\}$ according to \eqref{eqn:coherencelikequantity}.
Then, for each $k=1,\ldots,n,$
\begin{align*}
\Prob{\s_k(\widehat{\mat{U}}) \geq \sqrt{(1 + \delta) p}} & \leq (n-k+1) \cdot \left[ \frac{\e^\delta}{(1+\delta)^{1+\delta}} \right]^{p/\tau_{n-k+1}} & \text{for $\delta > 0$} \\
\Prob{\s_k(\widehat{\mat{U}}) \leq \sqrt{(1- \delta)p} } & \leq k \cdot \left[ \frac{\e^{-\delta}}{(1-\delta)^{1-\delta}} \right]^{p/\tau_k } & \text{for $\delta \in [0,1).$} 
\end{align*}

\end{thm}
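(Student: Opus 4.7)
The plan is to reduce the statement about singular values of $\widehat{\mat{U}}$ to an eigenvalue statement about a sum of independent random positive-semidefinite matrices, and then apply the eigenvalue Chernoff bounds of Theorem~\ref{thm:chernoff}. The key observation is that since the entries of $\mat{D}$ are Bernoullis, $\mat{D}^2 = \mat{D}$, so
\[
\widehat{\mat{U}}\widehat{\mat{U}}^\star \;=\; \mat{U}\mat{D}^2\mat{U}^\star \;=\; \mat{U}\mat{D}\mat{U}^\star \;=\; \sum_{j=1}^{r} \delta_j \vec{u}_j \vec{u}_j^\star \;=:\; \sum_j \mat{X}_j,
\]
where $\delta_j \sim \mathrm{Bern}(p)$ are i.i.d. Thus $\s_k(\widehat{\mat{U}})^2 = \lambda_k\bigl(\sum_j \mat{X}_j\bigr)$, and the tail bounds on $\s_k(\widehat{\mat{U}})$ follow from tail bounds on $\lambda_k\bigl(\sum_j \mat{X}_j\bigr)$ via the square-root.

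Next I would identify the parameters feeding into Theorem~\ref{thm:chernoff}. Since $\mat{U}$ has orthonormal rows, $\sum_j \E\mat{X}_j = p \sum_j \vec{u}_j\vec{u}_j^\star = p\,\mat{UU}^\star = p\mat{I}_n$, and consequently $\mu_k = \lambda_k(p\mat{I}_n) = p$ for every $k$. Because $\sum_j \E \mat{X}_j$ is a scalar multiple of the identity, \emph{every} $\mat{V} \in \Isom{n-k+1}{n}$ satisfies $\lambdamax{\sum_j \mat{V}^\star \E\mat{X}_j \mat{V}} = p$, and similarly for the minimum eigenvalue over $\Isom{k}{n}$; hence we have complete freedom in selecting the distinguished embeddings $\mat{V}_{+}$ and $\mat{V}_{-}$. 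Next, for any $\mat{V}$,
\[
\lambdamax{\mat{V}^\star \mat{X}_j \mat{V}} \;=\; \delta_j \,\lambdamax{\mat{V}^\star \vec{u}_j \vec{u}_j^\star \mat{V}} \;=\; \delta_j \,\norm{\mat{V}^\star \vec{u}_j}^2 \;\leq\; \norm{\mat{V}^\star \vec{u}_j}^2,
\]
so the function $\randcon(\mat{V}) := \max_j \norm{\mat{V}^\star \vec{u}_j}^2$ satisfies the bound~\eqref{eqn:randcondef} required by the Chernoff theorem.

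Since the bound in Theorem~\ref{thm:chernoff} is monotonically decreasing in the exponent $\mu_k/\randcon(\mat{V}_{\pm})$, I would choose $\mat{V}_{+}$ and $\mat{V}_{-}$ to minimize $\randcon$ over their respective Stiefel manifolds. By definition~\eqref{eqn:coherencelikequantity},
\[
\min_{\mat{V} \in \Isom{n-k+1}{n}} \randcon(\mat{V}) = \tau_{n-k+1} \qquad \text{and} \qquad \min_{\mat{V} \in \Isom{k}{n}} \randcon(\mat{V}) = \tau_k.
\]
Plugging $\mu_k = p$ and these values of $\randcon$ into the upper and lower Chernoff bounds of Theorem~\ref{thm:chernoff}, and passing from $\lambda_k\bigl(\sum_j\mat{X}_j\bigr) = \s_k(\widehat{\mat{U}})^2$ to $\s_k(\widehat{\mat{U}})$ by taking square roots of the thresholds $(1\pm\delta)p$, yields the two stated inequalities.

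There is no serious obstacle here: Theorem~\ref{thm:chernoff} does all the heavy lifting, and the work reduces to identifying the right summands, exploiting $\mat{D}^2 = \mat{D}$ and $\mat{UU}^\star = \mat{I}$, and recognizing that the optimal choice of compression $\mat{V}_{\pm}$ inside $\randcon$ is precisely what defines $\tau_k$. The only mildly delicate point is noticing that the freedom to choose $\mat{V}_{\pm}$ is unrestricted (because $\sum_j \E\mat{X}_j$ is a scalar multiple of the identity), which is what lets us optimize $\randcon$ against the geometry of $\mat{U}$ rather than being tied to an eigenspace structure.
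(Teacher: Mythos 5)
Your proposal is correct and follows essentially the same route as the paper: rewrite $\s_k(\widehat{\mat{U}})^2 = \lambda_k\bigl(\sum_j d_j \vec{u}_j\vec{u}_j^\star\bigr)$, observe that $\mu_k = p$ and that $\mat{UU}^\star = \mathbf{I}$ leaves the choice of $\mat{V}_\pm$ free, then pick $\mat{V}_\pm$ to minimize $\randcon(\mat{V}) = \max_j \norm{\mat{V}^\star \vec{u}_j}^2$ so that $\randcon(\mat{V}_+) = \tau_{n-k+1}$ and $\randcon(\mat{V}_-) = \tau_k$, and finally invoke Theorem~\ref{thm:chernoff} and take square roots. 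This matches the paper's argument step for step.
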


\begin{proof}
Observe, using \eqref{eqn:colsubsampling}, that
\[
 \s_k(\widehat{\mat{U}})^2 = \lambda_k(\mat{U} \mat{D}^2 \mat{U}^\star ) = \lambda_k \left( \sum_j d_j \vec{u}_j \vec{u}_j^\star \right),
\]
where $\vec{u}_j$ is the $j$th column of $\mat{U}$ and $d_j \sim \text{Bern}(p).$
Compute 
\[
 \mu_k = \lambda_k\left(\sum\nolimits_j \E d_j \vec{u}_j \vec{u}_j^\star \right) = p \cdot \lambda_k(\mat{U} \mat{U}^\star) = p \cdot \lambda_k(\mathbf{I}) = p.
\]
It follows that, for \emph{any} $\mat{V} \in \Isom{n-k+1}{n},$
\[
\lambdamax{\sum\nolimits_j \mat{V}^\star (\E d_j \vec{u}_j \vec{u}_j^\star) \mat{V} } = p \cdot \lambdamax{\mat{V}^\star \mat{V}} = p = \mu_k,
\]
so the choice of $\mat{V}_+ \in \Isom{n-k+1}{n}$ is arbitrary. Similarly, the choice of $\mat{V}_- \in \Isom{k}{n}$ is arbitrary. We select $\mat{V}_+$ to be an isometric embedding that achieves $\tau_{n-k+1}$ and $\mat{V}_-$ to be an isometric embedding that achieves $\tau_k$. Accordingly,
\begin{align*}
 \randcon(\mat{V}_+) & = \max\nolimits_j \|\mat{V}_+^* \vec{u}_j \vec{u}_j^* \mat{V}_+\| = \max\nolimits_j \|\mat{V}_+^* \vec{u}_j\|^2 = \tau_{n-k+1}, \quad \text{and} \\
 \randcon(\mat{V}_-) & = \max\nolimits_j \|\mat{V}_-^* \vec{u}_j \vec{u}_j^* \mat{V}_-\| = \max\nolimits_j \|\mat{V}_-^* \vec{u}_j\|^2 = \tau_{k}.
\end{align*}
Theorem \ref{thm:chernoff} delivers the upper bound
\begin{align*}
\Prob{\s_k(\hat{\mat{U}}) \geq \sqrt{(1 + \delta) p}} & = \Prob{\lambda_k\left(\sum_j d_j \vec{u}_j \vec{u}_j^\star \right) \geq (1 + \delta) p} 
\leq (n-k+1) \cdot \left[ \frac{\e^\delta}{(1+\delta)^{1+\delta}} \right]^{p/\tau_{n-k+1}}
\intertext{ for $\delta > 0$ and the lower bound }
\Prob{\s_k(\hat{\mat{U}}) \leq \sqrt{(1- \delta)p} } & = \Prob{\lambda_k\left(\sum_j d_j \vec{u}_j \vec{u}_j^\star \right) \leq (1 - \delta) p}
 \leq k \cdot \left[ \frac{\e^{-\delta}}{(1-\delta)^{1-\delta}} \right]^{p/\tau_k }
\end{align*}
for $\delta \in [0,1).$
\end{proof}

\begin{figure}[t!]
\includegraphics{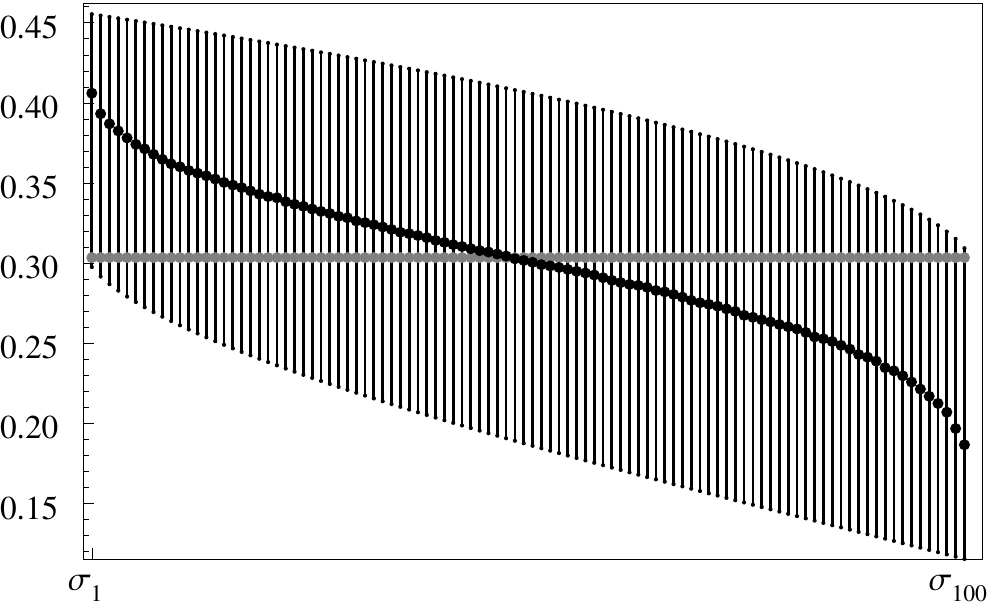}
\caption{[Spectrum of a random submatrix] The matrix $\mat{U}$ is a $10^2 \times 10^4$ submatrix of the unitary DFT matrix with dimension $10^4,$ and the sampling probability $p = 10^{-4} \log(10^4).$ The $k$th vertical bar, calculated using Theorem \ref{thm:colsampling}, describes an interval containing the median value of the $k$th singular value of the sampled matrix $\widehat{\mat{U}}$. The black circles denote the empirical medians of the singular values of $\widehat{\mat{U}}$, calculated from 500 trials. The gray circles represent the singular values of $\E \widehat{\mat{U}}.$}  
\label{fig:nlogn}
\end{figure}

To illustrate the discriminatory power of these bounds, let $\mat{U}$ be an $n \times n^2$ matrix consisting of $n$ rows of the $n^2 \times n^2$ Fourier matrix and choose $p = (\log n)/n$ so that, on average, sampling reduces the aspect ratio from $n$ to $\log n.$  For $n=100,$ we determine upper and lower bounds for the median value of $\s_k(\widehat{\mat{U}})$ by numerically finding the value of $\delta$ where the probability bounds in Theorem \ref{thm:colsampling} equal $1/2.$ Figure \ref{fig:nlogn} plots the empirical median value along with the computed interval. We see that these ranges reflect the behavior of the singular values more faithfully than the simple estimates $\s_k(\mathbb{E} \widehat{\mat{U}}) = p.$

\section{Covariance Estimation}
\label{sec:covarianceest}

We conclude with an extended example that illustrates how this circle of ideas allows one to answer interesting statistical questions. Specifically, we investigate the convergence of the individual eigenvalues of sample covariance matrices, with errors measured in \emph{relative} precision.

Covariance estimation is a basic and ubiquitious problem that arises in signal processing, graphical modeling, machine learning, and genomics, among other areas. Let $\{\vec{\eta}_j\}_{j=1}^n \subset \R^p$ be i.i.d.~samples drawn from some distribution with zero mean and covariance matrix $\mat{C}.$ Define the sample covariance matrix 
\[
 \widehat{\mat{C}}_n = \frac{1}{n} \sum_{j=1}^n \vec{\eta}_j\vec{\eta}_j^\star.
\]
An important challenge is to determine how many samples are needed to ensure that the empirical covariance estimator has a fixed relative accuracy in the spectral norm. That is, given a fixed $\varepsilon,$ how large must $n$ be so that 
\begin{equation}
\label{eqn:relacccovar}
 \snorm{\widehat{\mat{C}}_n - \mat{C}} \leq \varepsilon \snorm{\mat{C}}?
\end{equation}

This estimation problem has been studied extensively. It is now known that for distributions with a finite second moment, $\Omega(p \log p)$ samples suffice \cite{RU99}, and for log-concave distributions, $\Omega(p)$ samples suffice \cite{ALPT10b}. More broadly, Vershynin \cite{V10} conjectures that, for distributions with finite fourth moment, $\Omega(p)$ samples suffice; he establishes this result to within iterated log factors. In \cite{SrivastavaVershynin11}, Srivastava and Vershynin establish that $\Omega(p)$ samples suffice for distributions which have finite $2+\varepsilon$ moments, for some $\varepsilon >0,$ and satisfy an additional regularity condition.

Inequality \eqref{eqn:relacccovar} ensures that the difference between the $k$th eigenvalues of $\widehat{\mat{C}}_n$ and $\mat{C}$ is small, but it requires $\mathrm{O}(p)$ measurements to obtain estimates of even a few of the eigenvalues. Specifically, letting $\kappa_\ell = \lambda_1(\mat{C})/\lambda_\ell(\mat{C}),$ we see that $O(\varepsilon^{-2}\kappa_\ell^2 p)$ measurements are required to obtain relative-error estimates of the dominant $\ell$ eigenvalues of $\mat{C}$ using the results of \cite{ALPT10b,V10,SrivastavaVershynin11}. However, it is reasonable to expect that when the spectrum of $\mat{C}$ exhibits decay and $\ell \ll p,$ much fewer than $\mathrm{O}(p)$ measurements should suffice for relative-error recovery of the dominant $\ell$ eigenvalues. 

In this section, we derive a relative approximation bound for each eigenvalue of $\mat{C}$ that allows us to confirm this intuition. For simplicity we assume the samples are drawn from a $\mathcal{N}(\vec{0}, \mat{C})$ distribution where $\mat{C}$ is full-rank, but the arguments can be extended to cover other distributions.

\begin{thm}
\label{thm:covarest}
Assume that $\mat{C} \in \samats{p}$ is positive definite. Let $\{\vec{\eta}_j\}_{j=1}^n \subset \R^p$ be i.i.d.~samples drawn from a $\mathcal{N}(\vec{0}, \mat{C})$ distribution. Define 
\[
\widehat{\mat{C}}_n = \frac{1}{n} \sum\nolimits_{j=1}^n \vec{\eta}_j\vec{\eta}_j^\star.
\]
Write $\lambda_k$ for the $k$th eigenvalue of $\mat{C}$, and write $\hat{\lambda}_k$ for the $k$th eigenvalue of $\widehat{\mat{C}}_n.$ Then for $k=1,\ldots,p,$
\begin{align*}
\Prob{\hat{\lambda}_k \geq \lambda_k + t} & \leq  (p-k+1) \cdot \exp\left( \displaystyle \frac{-\mathrm{c} nt^2}{\lambda_k \sum_{i=k}^p \lambda_i} \right) \quad \text{for } t \leq 4 n \lambda_k, \\
\intertext{ and }
\Prob{\hat{\lambda}_k \leq \lambda_k - t} & \leq  k \cdot \exp\left( \displaystyle \frac{-\mathrm{c}nt^2}{ \lambda_1 \sum_{i=1}^k \lambda_i } \right) \quad \text{for } t \leq 4n\lambda_1,
\end{align*}
where the constant $\mathrm{c}$ is at least $1/32.$
\end{thm}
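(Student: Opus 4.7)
The plan is to invoke the $\mat{V}$-dependent minimax Laplace transform (Theorem~\ref{thm:eigtails}\eqref{eqn:compressedeigtails}, in the Bernstein-style variant used in the proof of Theorem~\ref{thm:subexponentialbernstein}) for the summands $\mat{X}_j = \frac{1}{n}\vec{\eta}_j\vec{\eta}_j^\star$. These satisfy $\sum_j \mat{X}_j = \widehat{\mat{C}}_n$ and $\sum_j\E\mat{X}_j = \mat{C}$, so the mean parameter is $\mu_k = \lambda_k$. The crucial structural observation is that for any $\mat{V}\in\Isom{d}{p}$, the compressed summand is itself a rank-one Gaussian sample-covariance contribution: writing $\vec{\xi}_j = \mat{V}^\star\vec{\eta}_j$, we have $\vec{\xi}_j\sim\mathcal{N}(\vec{0},\mat{S})$ with $\mat{S} = \mat{V}^\star\mat{C}\mat{V}$, and $\mat{V}^\star\mat{X}_j\mat{V} = \frac{1}{n}\vec{\xi}_j\vec{\xi}_j^\star$. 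All moment and moment generating function estimates therefore take place in the compressed space and are controlled by $\mat{S}$ alone.

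The central technical step is a subexponential matrix moment bound for the compressed summand. Using the rank-one identity $(\vec{\xi}_j\vec{\xi}_j^\star)^m = \|\vec{\xi}_j\|^{2(m-1)}\vec{\xi}_j\vec{\xi}_j^\star$ and passing to the eigenbasis of $\mat{S}$, the $(a,a)$ entry of $\E(\vec{\xi}_j\vec{\xi}_j^\star)^m$ becomes $s_a\E[T^{m-1}g_a^2]$ for independent $g_i\sim\mathcal{N}(0,1)$ and $T=\sum_i s_i g_i^2$. Gaussian integration by parts (Stein's identity) applied to $g_a$ gives the recurrence $\E[T^{m-1}g_a^2] = \E[T^{m-1}] + 2(m-1)s_a\E[T^{m-2}g_a^2]$; combining it with the standard weighted chi-squared moment estimate $\E[T^{m-1}] \leq C(m-1)!\,\tr(\mat{S})^{m-1}$ (which follows from the cumulants $\kappa_m(T) = 2^{m-1}(m-1)!\,\tr(\mat{S}^m) \leq 2^{m-1}(m-1)!\,\lambda_1(\mat{S})^{m-1}\tr(\mat{S})$) yields
\[
\E\bigl(\mat{V}^\star\mat{X}_j\mat{V}\bigr)^m \preceq \frac{m!}{2}\,B_\mat{V}^{m-2}\mat{\Sigma}_\mat{V}^2, \qquad B_\mat{V} = \frac{c_0\,\tr(\mat{S})}{n}, \quad \mat{\Sigma}_\mat{V}^2 = \frac{c_0'\,\tr(\mat{S})\,\mat{S}}{n^2},
\]
for absolute constants $c_0, c_0'$. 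Lemma~\ref{lemma:growthbernsteinmgf} then converts this into the semidefinite MGF bound $\E\e^{\theta\mat{V}^\star\mat{X}_j\mat{V}} \preceq \exp\!\bigl(\theta\mat{V}^\star\E\mat{X}_j\mat{V} + g_1(\theta)\mat{\Sigma}_\mat{V}^2\bigr)$ with $g_1(\theta) = \theta^2/(2 - 2\theta B_\mat{V})$.

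For the upper tail, substitute this MGF bound into the compressed minimax Laplace machinery and take $\mat{V}_+$ to span the bottom $(p-k+1)$-dimensional invariant subspace of $\mat{C}$; then $\mat{V}_+^\star\mat{C}\mat{V}_+ = \mathrm{diag}(\lambda_k, \ldots, \lambda_p)$ has trace $\sum_{i=k}^p \lambda_i$ and top eigenvalue $\lambda_k$, and the two matrices inside the trace exponential commute, so the trace exponential collapses to at most $(p-k+1)$ times a single scalar exponential, supplying the dimensional prefactor. Identifying $\sigma_k^2 \propto \lambda_k\sum_{i\geq k}\lambda_i/n$ and $B \propto \sum_{i\geq k}\lambda_i/n$ and optimizing the Bernstein exponent in $\theta$ delivers the claimed upper bound on the stated range of $t$. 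The lower tail follows from a negation argument: apply the identical machinery to the sequence $\{-\mat{X}_j\}$ at eigenvalue index $p-k+1$ (the odd moments of $-\mat{X}_j$ are negative and hence trivially dominated by any positive semidefinite matrix, while the even moments coincide with those of $\mat{X}_j$), and choose $\mat{V}_-$ to span the \emph{top} $k$-dimensional invariant subspace of $\mat{C}$, so that $\tr(\mat{V}_-^\star\mat{C}\mat{V}_-) = \sum_{i\leq k}\lambda_i$ and $\lambdamax{\mat{V}_-^\star\mat{C}\mat{V}_-} = \lambda_1$; this is the source of the asymmetry between $\lambda_k$ and $\lambda_1$ in the two bounds. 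The main obstacle is the first-principles Gaussian moment calculation that produces the subexponential growth bound uniformly in $\mat{V}$; once it is established, the remainder is routine Bernstein optimization, and the explicit constant $\mathrm{c}\geq 1/32$ emerges by tracking the numerical constants through that optimization.
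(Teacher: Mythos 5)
Your proposal follows the same overall architecture as the paper's proof: compress to the Courant--Fischer-optimal invariant subspaces $\mat{V}_\pm$, observe that the compressed summands are themselves rank-one Gaussian outer products, establish a subexponential matrix moment growth condition, and finish with the split Bernstein inequality of Theorem~\ref{thm:subexponentialbernstein} (the paper routes this through Lemma~\ref{lemma:splittail} and then invokes Theorem~\ref{thm:subexponentialbernstein} with $\mat{V}_+ = \mathbf{I}$, while you invoke the compressed form of Theorem~\ref{thm:eigtails} directly; the two are equivalent once the compressing isometry is fixed). The lower tail via negation and the $p-k+1 \leftrightarrow k$ swap, the choice of $\mat{V}_-$ as the top $k$-dimensional invariant subspace, and the observation that odd moments of $-\vec{\xi}\vec{\xi}^\star$ are trivially semidefinite-dominated all match the paper's argument.

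The genuine novelty in your route is the derivation of the key moment bound (the paper's Lemma~\ref{lemma:momentbound}, $\E(\vec{\xi}\vec{\xi}^\star)^m \preceq 2^m m!\,(\tr\mat{G})^{m-1}\mat{G}$). The paper proves it by a multinomial expansion and the generalized AM-GM inequality applied to $\E[\gamma_1^{2\ell_1}\cdots\gamma_p^{2\ell_p}\gamma_i^2]$; you propose Gaussian integration by parts (Stein's identity) plus the cumulant structure of the weighted chi-squared variable $T = \vec{\gamma}^\star\mat{\Lambda}\vec{\gamma}$. Your Stein recurrence $\E[T^{m-1}g_a^2] = \E[T^{m-1}] + 2(m-1)s_a\E[T^{m-2}g_a^2]$ is correct and, unrolled against $s_a\leq\tr\mat{S}$, does yield the right geometric-factorial growth; this is a legitimate alternative and arguably more transparent than the paper's AM-GM step. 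One concrete slip worth fixing: the intermediate bound you assert, $\E[T^{m-1}] \leq C\,(m-1)!\,\tr(\mat{S})^{m-1}$ with an absolute constant $C$, is false as written --- e.g.\ for scalar $T = g^2$ one gets $\E[T^{m-1}] = (2m-3)!!$, which grows like $2^{m-1}(m-1)!/\sqrt{m}$. The correct statement carries the geometric factor, $\E[T^{m-1}]\leq 2^{m-1}(m-1)!\,\tr(\mat{S})^{m-1}$, which your own cumulant formula $\kappa_m(T)=2^{m-1}(m-1)!\tr(\mat{S}^m)$ already foreshadows. Since you absorb constants into $B_\mat{V}$ and $\mat{\Sigma}_\mat{V}^2$ anyway (and your final form with $c_0=2$, $c_0'=8$ reproduces the paper's $B$ and $\mat{\Sigma}_j^2$ exactly), this does not break the proof, but the intermediate claim should be corrected.
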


The following corollary provides an answer to our question about relative error estimates.
\begin{cor}
\label{cor:relerrcovarest}
Let $\lambda_k$ and $\hat{\lambda}_k$ be as in Theorem \ref{thm:covarest}. Then 
\begin{align*}
\Prob{\hat{\lambda}_k \geq (1 + \varepsilon) \lambda_k} & \leq 
(p-k+1) \cdot \exp\left(\frac{- \mathrm{c} n \varepsilon^2 }{\sum\nolimits_{i=k}^p \frac{\lambda_i}{\lambda_k}} \right) \quad \text{for } \varepsilon \leq 4 n, \\
\intertext{ and }
\Prob{\hat{\lambda}_k  \leq (1 - \varepsilon) \lambda_k} & \leq k \cdot \exp\left( \frac{- \mathrm{c} n \varepsilon^2}{\frac{\lambda_1}{\lambda_k}\sum\nolimits_{i=1}^k \frac{\lambda_i}{\lambda_k}} \right) \quad \text{for } \varepsilon \in (0,1],
\end{align*}
where the constant $\mathrm{c}$ is at least $1/32.$
\end{cor}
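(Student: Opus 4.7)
The plan is to obtain the corollary as a direct substitution into Theorem \ref{thm:covarest}, since the two inequalities $\{\hat{\lambda}_k \geq (1+\varepsilon)\lambda_k\}$ and $\{\hat{\lambda}_k \leq (1-\varepsilon)\lambda_k\}$ are simply the events $\{\hat{\lambda}_k \geq \lambda_k + t\}$ and $\{\hat{\lambda}_k \leq \lambda_k - t\}$ with the choice $t = \varepsilon \lambda_k$. No additional probabilistic work is needed; only an algebraic simplification and a check of the admissible ranges of $t$.

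For the upper tail, I would substitute $t = \varepsilon \lambda_k$ into the first bound of Theorem \ref{thm:covarest}. The side condition $t \leq 4n\lambda_k$ becomes $\varepsilon \leq 4n$, which is exactly the hypothesis in the corollary. The exponent simplifies as
\[
\frac{-\mathrm{c} n t^{2}}{\lambda_k \sum_{i=k}^{p} \lambda_i}
\;=\; \frac{-\mathrm{c} n \varepsilon^{2} \lambda_k^{2}}{\lambda_k \sum_{i=k}^{p} \lambda_i}
\;=\; \frac{-\mathrm{c} n \varepsilon^{2}}{\sum_{i=k}^{p} \lambda_i/\lambda_k},
\]
which yields the stated bound with the same constant $\mathrm{c} \geq 1/32$ inherited from the theorem.

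For the lower tail, I substitute $t = \varepsilon \lambda_k$ into the second bound of Theorem \ref{thm:covarest}. Here the admissibility condition is $t \leq 4n \lambda_1$, and since $\varepsilon \in (0,1]$ gives $t = \varepsilon \lambda_k \leq \lambda_k \leq \lambda_1 \leq 4n\lambda_1$ for any $n \geq 1$, the condition holds automatically. The exponent becomes
\[
\frac{-\mathrm{c} n t^{2}}{\lambda_1 \sum_{i=1}^{k} \lambda_i}
\;=\; \frac{-\mathrm{c} n \varepsilon^{2} \lambda_k^{2}}{\lambda_1 \sum_{i=1}^{k} \lambda_i}
\;=\; \frac{-\mathrm{c} n \varepsilon^{2}}{\frac{\lambda_1}{\lambda_k} \sum_{i=1}^{k} \lambda_i/\lambda_k},
\]
after dividing numerator and denominator by $\lambda_k^{2}$. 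This produces exactly the bound claimed in the corollary.

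Since this argument is essentially just a change of variables, I do not expect any real obstacle; the only point requiring mild care is the verification that the admissible range $t \leq 4n\lambda_1$ in the lower-tail bound is not more restrictive than $\varepsilon \in (0,1]$, which as observed above is immediate for any sample size $n \geq 1$. All the substantive work, including the control of the subexponential moments of $\vec{\eta}_j \vec{\eta}_j^\star$ and the application of Theorem \ref{thm:subexponentialbernstein}, has already been carried out in the proof of Theorem \ref{thm:covarest}.
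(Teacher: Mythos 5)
Your proposal is correct and matches the paper's implicit argument: the corollary is an immediate consequence of Theorem \ref{thm:covarest} obtained by substituting $t = \varepsilon\lambda_k$, and the paper offers no separate proof precisely because this substitution (and the check that the range $\varepsilon \in (0,1]$ implies $t \leq 4n\lambda_1$, since $\lambda_k \leq \lambda_1$) is all that is needed.
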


The first bound in Corollary \ref{cor:relerrcovarest} tells us how many samples are needed to ensure that $\hat{\lambda}_k$ does not overestimate $\lambda_k.$ Likewise, the second bound tells us how many samples ensure that $\hat{\lambda}_k$ does not underestimate $\lambda_k.$

Corollary \ref{cor:relerrcovarest} suggests that the relationship of $\hat{\lambda}_k$ to $\lambda_k$ is determined by the spectrum of $\mat{C}$ in the following manner. When the eigenvalues below $\lambda_k$ are small compared with $\lambda_k$, the quantity
\[
 \sum_{i=k}^p \lambda_i/\lambda_k
\]
is small, and so $\hat{\lambda}_k$ is not likely to overestimate $\lambda_k$. Similarly, when the eigenvalues above $\lambda_k$ are comparable with $\lambda_k$, the quantity
\[
\frac{\lambda_1}{\lambda_k}\sum_{i=1}^k \lambda_i/\lambda_k 
\]
is small, and so $\hat{\lambda}_k$ is not likely to underestimate $\lambda_k$.

We now have everything needed to establish Theorem~\ref{thm:examplecovarest}.


\begin{proof}[Proof of Theorem \ref{thm:examplecovarest} from Corollary \ref{cor:relerrcovarest}]
From Corollary \ref{cor:relerrcovarest}, we see that 
\[
\Prob{\hat{\lambda}_k \leq (1-\varepsilon) \lambda_k } \leq p^{-\beta} \quad \text{when}\quad n \geq 32 \varepsilon^{-2} \left( \frac{\lambda_1}{\lambda_k} \sum\nolimits_{i\leq k} \frac{\lambda_i}{\lambda_k} \right) (\log k + \beta \log p).
\]
Recall that $\kappa_k = \lambda_1(\mat{C})/\lambda_k(\mat{C}).$ Clearly, taking $n = \Omega(\varepsilon^{-2} \kappa_\ell^2 \ell \log p)$ samples ensures that, with high probability, each of the top $\ell$ eigenvalues of the sample covariance matrix satisfies $\hat{\lambda}_k > (1-\varepsilon) \lambda_k.$

Likewise, 
\[
\Prob{\hat{\lambda}_k \geq (1+\varepsilon) \lambda_k } \leq p^{-\beta} \quad\text{when}\quad n \geq 32 \varepsilon^{-2} \left(\sum_{i \geq k} \frac{\lambda_i }{\lambda_k} \right)(\log (p-k+1) + \beta \log p).
\]
Assuming the stated decay condition, that
\[
\sum\nolimits_{i > \ell} \lambda_i = \mathrm{O}(\lambda_1),
\]
we see that taking $n = \Omega(\varepsilon^{-2} (\ell + \kappa_\ell) \log p)$ samples ensures that, with high probability, each of the top $\ell$ eigenvalues of the sample covariance matrix satisfies $\hat{\lambda}_k < (1+\varepsilon) \lambda_k.$

Combining these two results, we conclude that $n = \Omega( \varepsilon^{-2} \kappa_\ell^2 \ell \log p)$ ensures that the top $\ell$ eigenvalues of $\mat{C}$ are estimated to within relative precision $1 \pm \varepsilon.$
\end{proof}

\begin{remark}
The results in Theorem \ref{thm:covarest} and Corollary \ref{cor:relerrcovarest} also apply when $\mat{C}$ is rank-deficient: simply replace each occurence of the dimension $p$ in the bounds with $\rank(\mat{C}).$ 
\end{remark}

\subsection{Proof of Theorem \ref{thm:covarest}}
We now prove Theorem \ref{thm:covarest}. This result requires supporting lemmas; we defer their proofs until after a discussion of extensions to Theorem \ref{thm:covarest}.

We study the error $|\lambda_k(\widehat{\mat{C}}_n) - \lambda_k(\mat{C})|.$ To apply the methods developed in this paper, we pass to a question about the eigenvalues of a difference of two matrices. The first lemma accomplishes this goal by compressing both the population covariance matrix and the sample covariance matrix to a fixed invariant subspace of the population covariance matrix.

\begin{lemma}
\label{lemma:splittail}
Let $\mat{X}$ be a random self-adjoint matrix with dimension $p,$ and let $\mat{A}$ be a fixed self-adjoint matrix with dimension $p$. Choose $\mat{W}_+ \in \Isom{p-k+1}{p}$ and $\mat{W}_- \in \Isom{k}{p}$ for which
\[
  \lambda_k(\mat{A}) = \lambdamax{\mat{W}_+^*\mat{A}\mat{W}_+} = \lambdamin{\mat{W}_-^*\mat{A}\mat{W}_-}.
\]
Then, for all $t >0,$
\begin{align}
 \Prob{\lambda_k(\mat{X}) \geq \lambda_k(\mat{A}) + t} & \leq \Prob{\lambdamax{\mat{W}_+^\star \mat{X} \mat{W}_+} \geq \lambda_k(\mat{A}) + t } \label{eqn:covardeterministicupperbnd}\\
\intertext{ and }
 \Prob{\lambda_k(\mat{X}) \leq \lambda_k(\mat{A}) -t } &\leq \Prob{\lambdamax{\mat{W}_-^\star(- \mat{X}) \mat{W}_-} \geq -\lambda_k(\mat{A}) + t}.
 \label{eqn:covardeterministiclowerbnd}
\end{align}
\end{lemma}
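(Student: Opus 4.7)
The plan is to apply the Courant--Fischer min-max characterization (Proposition~\ref{prop:isometrycf}) pointwise, converting each probabilistic event into a comparison between $\lambda_k(\mat{X})$ and the largest eigenvalue of a fixed compression of $\mat{X}$.

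For the upper bound~\eqref{eqn:covardeterministicupperbnd}, I would invoke the min-max formula~\eqref{eqn:maxvariation}, which asserts that
\[
\lambda_k(\mat{X}) \;=\; \min_{\mat{V} \in \Isom{p-k+1}{p}} \lambdamax{\mat{V}^\star \mat{X} \mat{V}}.
\]
In particular, for the specific $\mat{W}_+ \in \Isom{p-k+1}{p}$ fixed in the hypothesis we obtain the deterministic bound $\lambda_k(\mat{X}) \leq \lambdamax{\mat{W}_+^\star \mat{X} \mat{W}_+}$. Consequently, the event $\{\lambda_k(\mat{X}) \geq \lambda_k(\mat{A}) + t\}$ is contained in the event $\{\lambdamax{\mat{W}_+^\star \mat{X} \mat{W}_+} \geq \lambda_k(\mat{A}) + t\}$, and monotonicity of probability delivers the claim.

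For the lower bound~\eqref{eqn:covardeterministiclowerbnd}, I would first flip signs via the identity $\lambda_k(\mat{X}) = -\lambda_{p-k+1}(-\mat{X})$ noted after Proposition~\ref{prop:isometrycf}. The event $\{\lambda_k(\mat{X}) \leq \lambda_k(\mat{A}) - t\}$ is therefore identical to $\{\lambda_{p-k+1}(-\mat{X}) \geq -\lambda_k(\mat{A}) + t\}$. Applying~\eqref{eqn:maxvariation} with index $p-k+1$ (so that the relevant Stiefel manifold is $\Isom{k}{p}$) gives
\[
\lambda_{p-k+1}(-\mat{X}) \;=\; \min_{\mat{V} \in \Isom{k}{p}} \lambdamax{\mat{V}^\star (-\mat{X}) \mat{V}} \;\leq\; \lambdamax{\mat{W}_-^\star (-\mat{X}) \mat{W}_-}
\]
for the fixed $\mat{W}_-$ from the hypothesis, and inclusion of events plus monotonicity of probability finishes the argument.

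Since both conclusions hold for arbitrary isometries of the appropriate dimensions, the auxiliary properties imposed on $\mat{W}_\pm$ (that they realize $\lambda_k(\mat{A})$ as an extreme eigenvalue of the compression of $\mat{A}$) are not used in establishing the inequalities themselves; they merely single out the subspaces that will be convenient when the lemma is invoked to prove Theorem~\ref{thm:covarest}. There is no real obstacle here: the whole argument is a one-line application of Courant--Fischer twice, once directly and once through the sign-flip identity.
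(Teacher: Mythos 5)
Your proof is correct and takes essentially the same route as the paper: both arguments are direct applications of Courant--Fischer, converting the eigenvalue event into an event about the compression by $\mat{W}_\pm$ and using monotonicity of probability. The only cosmetic difference is in the lower bound, where you negate first via $\lambda_k(\mat{X}) = -\lambda_{p-k+1}(-\mat{X})$ and then apply the min-max formula~\eqref{eqn:maxvariation}, while the paper applies the max-min formula~\eqref{eqn:minvariation} first and negates at the end; these are trivially equivalent. Your closing remark is also accurate: the inequalities hold for arbitrary isometries of the appropriate dimensions, and the stated extremal property of $\mat{W}_\pm$ only matters when the lemma is subsequently specialized to $\mat{A} = \mat{C}$.
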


We apply this result with $\mat{A} = \mat{C}$ and $\mat{X} = \widehat{\mat{C}}_n.$ Because $\widehat{\mat{C}}_n$ is unbounded, we apply Theorem \ref{thm:subexponentialbernstein} to handle the estimates in \eqref{eqn:covardeterministicupperbnd} and \eqref{eqn:covardeterministiclowerbnd}. To use this theorem, we need the following moment growth estimate for rank-one Wishart matrices.

\begin{lemma}
\label{lemma:momentbound}
Let $\vec{\xi} \sim \mathcal{N}(\vec{0},\mat{G}).$ Then for any integer $m \geq 2,$
$$
\E\left(\vec{\xi}\vec{\xi}^\star \right)^m \preceq 2^m m! (\tr \mat{G})^{m-1}\cdot \mat{G}.
$$
\end{lemma}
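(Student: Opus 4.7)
The plan is to reduce the matrix moment to a scalar quadratic-form inequality by testing against an arbitrary unit vector, and then apply Hölder to separate the dependence on $\vec{\xi}$ from the dependence on $\vec{u}$. First, I would use the rank-one structure of $\vec{\xi}\vec{\xi}^\star$ to rewrite
\[
(\vec{\xi}\vec{\xi}^\star)^m = \|\vec{\xi}\|^{2(m-1)}\, \vec{\xi}\vec{\xi}^\star,
\]
so that proving the semidefinite bound amounts to showing
\[
\E\!\left[\|\vec{\xi}\|^{2(m-1)}(\vec{u}^\star\vec{\xi})^2\right] \leq 2^m m!\,(\tr\mat{G})^{m-1}\, (\vec{u}^\star \mat{G}\vec{u})
\]
for every unit vector $\vec{u}$.

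Next, I would apply Hölder's inequality with conjugate exponents $p=m/(m-1)$ and $q=m$ to get
\[
\E\!\left[\|\vec{\xi}\|^{2(m-1)}(\vec{u}^\star\vec{\xi})^2\right] \leq \bigl(\E\|\vec{\xi}\|^{2m}\bigr)^{(m-1)/m}\bigl(\E(\vec{u}^\star\vec{\xi})^{2m}\bigr)^{1/m}.
\]
The second factor is computed exactly from Gaussian moments: since $\vec{u}^\star\vec{\xi}\sim\mathcal{N}(0,\vec{u}^\star\mat{G}\vec{u})$, we get $\E(\vec{u}^\star\vec{\xi})^{2m} = (2m-1)!!\,(\vec{u}^\star\mat{G}\vec{u})^m$.

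The step I expect to carry the weight of the argument is controlling $\E\|\vec{\xi}\|^{2m}$ without introducing a dimensional factor. My approach is to write $\vec{\xi}=\mat{G}^{1/2}\vec{g}$ with $\vec{g}\sim\mathcal{N}(\vec{0},\mat{I})$, spectrally decompose $\mat{G}=\sum_i\lambda_i\vec{u}_i\vec{u}_i^\star$, and use the convexity trick
\[
\|\vec{\xi}\|^{2m} = (\tr\mat{G})^m\!\left(\sum_i\tfrac{\lambda_i}{\tr\mat{G}}\,g_i^2\right)^{\!m} \leq (\tr\mat{G})^m \sum_i\tfrac{\lambda_i}{\tr\mat{G}}\,g_i^{2m}
\]
(Jensen with the convex weight $x\mapsto x^m$ against the probability vector $\{\lambda_i/\tr\mat{G}\}$). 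Taking expectations yields $\E\|\vec{\xi}\|^{2m} \leq (\tr\mat{G})^m (2m-1)!!$, which is the right bound with no dimension dependence.

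Finally, I would assemble the pieces to obtain
\[
\vec{u}^\star\E(\vec{\xi}\vec{\xi}^\star)^m\vec{u} \leq (2m-1)!!\,(\tr\mat{G})^{m-1}(\vec{u}^\star\mat{G}\vec{u}),
\]
and convert to the stated form using $(2m-1)!!=(2m)!/(2^m m!)\leq 2^m m!$, which follows from $\binom{2m}{m}\leq 4^m$. Since this inequality holds for every unit $\vec{u}$, the desired semidefinite inequality $\E(\vec{\xi}\vec{\xi}^\star)^m\preceq 2^m m!\,(\tr\mat{G})^{m-1}\mat{G}$ follows. The routine calculations are the Gaussian moment formula and the Stirling-type estimate; the one place where care is needed is the Jensen step, which crucially uses that the coefficients $\lambda_i/\tr\mat{G}$ form a probability vector, so no factor of $\rank\mat{G}$ leaks into the bound.
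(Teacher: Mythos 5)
Your proof is correct, and it takes a genuinely different route from the paper's. The paper also reduces to the standard normal case via $\vec{\xi}=\mat{U\Lambda}^{1/2}\vec{\gamma}$, but then observes that the bracketed matrix $\E[(\vec{\gamma}^\star\mat{\Lambda}\vec{\gamma})^{m-1}\vec{\gamma}\vec{\gamma}^\star]$ is diagonal by symmetry and bounds each diagonal entry directly. The paper's key step expands $(\vec{\gamma}^\star\mat{\Lambda}\vec{\gamma})^{m-1}\gamma_i^2$ by the multinomial theorem and then uses an AM--GM inequality followed by the $L_r$ triangle inequality to dominate every mixed moment $\E\,\gamma_1^{2\ell_1}\cdots\gamma_p^{2\ell_p}\gamma_i^2$ by the single moment $\E g^{2m}$ of a scalar standard normal; resumming the multinomial series then produces the factor $(\tr\mat{G})^{m-1}$. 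Your argument replaces all of this with a Hölder split of $\|\vec{\xi}\|^{2(m-1)}(\vec{u}^\star\vec{\xi})^2$ at the level of the quadratic form $\vec{u}^\star\E(\vec{\xi}\vec{\xi}^\star)^m\vec{u}$, which separates the two factors cleanly and lets you handle each by an exact Gaussian moment computation; the scalar Jensen step with the probability vector $\{\lambda_i/\tr\mat{G}\}$ plays the same structural role as the paper's AM--GM trick in preventing a factor of the rank or dimension from appearing. Both proofs land on exactly the same intermediate constant $(2m-1)!!$ before relaxing to $2^m m!$, so neither is sharper; yours is shorter and more modular (you never need to argue that a certain matrix is diagonal), while the paper's entrywise computation is arguably more self-contained in that it does not invoke Hölder. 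The one small presentational gap in your sketch is that passing from $\|\vec{\xi}\|^2=\vec{g}^\star\mat{G}\vec{g}$ to $\sum_i\lambda_i g_i^2$ implicitly uses the rotational invariance of the standard Gaussian under the orthogonal change of basis that diagonalizes $\mat{G}$; this is standard and worth a half-sentence, but it is not a real gap.
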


With these preliminaries addressed, we prove Theorem \ref{thm:covarest}.

\begin{proof}[Proof of upper estimate]
First we consider the probability that $\hat{\lambda}_k$ overestimates $\lambda_k$. Let $\mat{W}_+ \in \Isom{p-k+1}{p}$ satisfy
\[
 \lambda_k(\mat{C}) = \lambdamax{\mat{W}_+^\star \mat{C} \mat{W}_+}.
\]
Then Lemma \ref{lemma:splittail} implies
\begin{align}
 \Prob{\lambda_k(\widehat{\mat{C}}_n) \geq \lambda_k(\mat{C}) + t} & \leq \Prob{\lambdamax{\mat{W}_+^\star \widehat{\mat{C}}_n \mat{W}_+} \geq \lambda_k(\mat{C}) + t} \notag \\
& = \Prob{\lambdamax{\sum\nolimits_j \mat{W}_+^\star (\vec{\eta}_j \vec{\eta}_j^\star) \mat{W}_+} \geq n \lambda_k(\mat{C}) + n t}.
\label{eqn:upperestimateprob}
\end{align}
The factor $n$ comes from the normalization of the sample covariance matrix.

The covariance matrix of $\vec{\eta}_j$ is $\mat{C},$ so that of $\mat{W}_+^\star \vec{\eta}_j$ is $\mat{W}_+^\star\mat{C}\mat{W}_+.$ Apply Lemma \ref{lemma:momentbound} to verify that $\mat{W}_+^\star \vec{\eta}_j \vec{\eta}_j \mat{W}_+$ satisfies the subexponential moment growth bound required by Theorem \ref{thm:subexponentialbernstein} with 
\[
 B = 2 \tr(\mat{W}_+^*\mat{C}\mat{W}_+) \quad\text{ and }\quad \mat{\Sigma}_j^2 = 8\tr(\mat{W}_+^*\mat{C}\mat{W}_+)\cdot \mat{W}_+^*\mat{C}\mat{W}_+.
\]
In fact, $\mat{W}_+^*\mat{C}\mat{W}_+$ is the compression of $\mat{C}$ to the invariant subspace corresponding with its bottom $p-k+1$ eigenvalues, so 
\[
B = 2 \sum\nolimits_{i=k}^p \lambda_i(\mat{C}) \quad\text{and}\quad \lambdamax{\mat{\Sigma}_j^2} = 8 \lambda_k(\mat{C}) \sum\nolimits_{i=k}^p \lambda_i(\mat{C}).
\]
We are concerned with the maximum eigenvalue of the sum in \eqref{eqn:upperestimateprob}, so we take $\mat{V}_+ = \mathbf{I}$ in the statement of Theorem \ref{thm:subexponentialbernstein} to find that
\begin{align*}
 \sigma_1^2 & = \lambdamax{\sum\nolimits_j \mat{\Sigma}_j^2 } = n \lambdamax{\mat{\Sigma}_1^2} = 8 n \lambda_k(\mat{C}) \sum\nolimits_{i=k}^p \lambda_i(\mat{C}) \quad \text{and}
\\ \mu_1 & = \lambdamax{\sum_j \mat{W}_+^\star \E(\vec{\eta}_j\vec{\eta}_j^\star) \mat{W}_+ } = n\lambdamax{\mat{W}_+^\star \mat{C} \mat{W}_+} = n \lambda_k(\mat{C}).
\end{align*}
It follows from the subgaussian branch of the split Bernstein inequality of Theorem \ref{thm:subexponentialbernstein} that 
\[
\Prob{\lambdamax{\sum\nolimits_j \mat{W}_+^\star (\vec{\eta}_j \vec{\eta}_j^\star) \mat{W}_+} \geq n \lambda_k(\mat{C}) + n t} \leq 
 (p-k+1) \cdot \exp\left( \displaystyle \frac{-nt^2}{32 \lambda_k(\mat{C}) \sum_{i=k}^p \lambda_i(\mat{C})} \right)
\]
when $t \leq 4 n \lambda_k(\mat{C}).$ This provides the desired bound on the probability that $\lambda_k(\widehat{\mat{C}}_n)$ overestimates $\lambda_k(\mat{C}).$
\end{proof}

\begin{proof}[Proof of lower estimate]
Now we consider the probability that $\hat{\lambda}_k$ underestimates $\lambda_k.$ The proof proceeds similarly to the proof of the upper estimate. Let $\mat{W}_- \in \Isom{k}{p}$ satisfy
\[
\lambda_k(\mat{C}) = \lambdamin{\mat{W}_-^\star \mat{C} \mat{W}_-}.
\]
Then Lemma \ref{lemma:splittail} implies
\begin{align}
\Prob{\lambda_k(\widehat{\mat{C}}_n) \leq \lambda_k(\mat{C}) - t} & \leq \Prob{\lambdamax{\mat{W}_-^\star (-\widehat{\mat{C}}_n ) \mat{W}_-} \geq -n \lambda_k(\mat{C}) + nt} \notag \\
 & = \Prob{\lambdamax{\sum\nolimits_j \mat{W}_-^\star (-\mat{\eta}_j \mat{\eta}_j^\star) \mat{W}_-} \geq -n \lambda_k(\mat{C}) + nt}
\label{eqn:lowerestimateprob}
\end{align}
The factor $n$ comes from the normalization of the sample covariance matrix.

The covariance matrix of $\vec{\eta}_j$ is $\mat{C},$ so that of $\mat{W}_-^\star \vec{\eta}_j$ is $\mat{W}_-^\star \mat{C} \mat{W}_-.$ Apply Lemma \ref{lemma:momentbound} to verify that for any integer $m \geq 2,$
\[
\E(\mat{W}_-^\star(- \vec{\eta}_j \vec{\eta}_j^\star) \mat{W}_-)^m \preceq \E(\mat{W}_-^\star \vec{\eta}_j \vec{\eta}_j^\star \mat{W}_-)^m \preceq 2^m m!\tr(\mat{W}_-^\star \mat{C} \mat{W}_-)^{m-1} \cdot \mat{W}_-^\star \mat{C} \mat{W}_-. 
\]
Thus, $\mat{W}_-^\star (-\vec{\eta}_j \vec{\eta}_j^\star) \mat{W}_-$ satisfies the subexponential moment growth bound required by Theorem \ref{thm:subexponentialbernstein} with
\[
B = 2 \tr(\mat{W}_-^\star \mat{C} \mat{W}_-) \quad \text{and} \quad \mat{\Sigma}_j^2 = 8 \tr(\mat{W}_-^\star \mat{C} \mat{W}_-) \cdot \mat{W}_-^\star \mat{C} \mat{W}_-.
\]
In fact, $\mat{W}_-^\star \mat{C} \mat{W}_-$ is the compression of $\mat{C}$ to the invariant subspace corresponding with its top $k$ eigenvalues, so
\[
B = 2 \sum\nolimits_{i=1}^k \lambda_i(\mat{C}) \quad \text{and} \quad \lambdamax{\mat{\Sigma}_j^2} = 8\lambda_1(\mat{C}) \sum\nolimits_{i=1}^k \lambda_i(\mat{C}).
\]
We are concerned with the maximum eigenvalue of the sum in \eqref{eqn:lowerestimateprob}, so we take $\mat{V}_+ = \mathbf{I}$ in the statement of Theorem \ref{thm:subexponentialbernstein} to find that 
\begin{align*}
\sigma_1^2 & = \lambdamax{\sum\nolimits_j \mat{\Sigma}_j^2} = n \lambdamax{\mat{\Sigma}_1^2} = 8n \lambda_1(\mat{C}) \sum\nolimits_{i=1}^k \lambda_i(\mat{C}) \quad \text{and} \\
\mu_1 & = \lambdamax{\sum\nolimits_j \mat{W}_-^\star \E(-\vec{\eta}_j \vec{\eta}_j^\star) \mat{W}_-} = n \lambdamax{\mat{W}_-^\star(-\mat{C})\mat{W}_-} = -n\lambda_k(\mat{C}).
\end{align*}
It follows from the subgaussian branch of the split Bernstein inequality of Theorem \ref{thm:subexponentialbernstein} that
\[
\Prob{\lambdamax{\sum\nolimits_j \mat{W}_-^\star (-\vec{\eta}_j \vec{\eta}_j^\star) \mat{W}_-} \geq -n\lambda_k(\mat{C}) + nt} \leq k \cdot \exp\left(\displaystyle \frac{-nt^2}{32 \lambda_1(\mat{C}) \sum\nolimits_{i=1}^k \lambda_i(\mat{C})} \right)
\]
when $t \leq 4n\lambda_1(\mat{C}).$ This provides the desired bound on the probability that $\lambda_k(\widehat{\mat{C}}_n)$ underestimates $\lambda_k(\mat{C}).$
\end{proof}

\subsection{Extensions of Theorem~\ref{thm:covarest}}
Results analogous to Theorem~\ref{thm:covarest} can be established for other distributions. If the distribution is bounded, the possibility that $\hat{\lambda}_k$ deviates above or below $\lambda_k$ can be controlled using the Bernstein inequality of Theorem \ref{thm:bennett}. If the distribution is unbounded but has matrix moments that satisfy a sufficiently nice growth condition, the probability that $\hat{\lambda}_k$ deviates below $\lambda_k$ as well as the probability that it deviates above $\lambda_k$ can be bounded using a Bernstein inequality analogous to that in Theorem \ref{thm:subexponentialbernstein}.

Theorem~\ref{thm:covarest} controls the error in the $k$th sample eigenvalue in terms of all the eigenvalues of the covariance matrix, so it is most useful when the eigenvalues of the covariance matrix satisfy decay conditions such as those given in the statement of Theorem~\ref{thm:examplecovarest}. If such conditions are not satisfied, the results of \cite{ALPT10b} on the convergence of empirical covariance matrices of isotropic log-concave random vectors lead to tighter bounds on the probabilities that $\hat{\lambda}_k$ overestimates or underestimates $\lambda_k.$ 

To see the relevance of the results in \cite{ALPT10b}, first observe the following consequence of the subadditivity of the maximum eigenvalue mapping:
\begin{align*}
\lambdamax{\mat{W}_+^\star(\mat{X} - \mat{A}) \mat{W}_+} & \geq \lambdamax{\mat{W}_+^\star \mat{X} \mat{W}_+} - \lambdamax{\mat{W}_+^\star \mat{A} \mat{W}_+} \\
 &= \lambdamax{\mat{W}_+^\star \mat{X} \mat{W}_+} - \lambda_k(\mat{A}).
\end{align*}
In conjunction with \eqref{eqn:covardeterministicupperbnd}, this gives us the following control on the probability that $\lambda_k(\mat{X})$ overestimates $\lambda_k(\mat{A}):$ 
\[
 \Prob{\lambda_k(\mat{X}) \geq \lambda_k(\mat{A}) + t} \leq \Prob{\lambdamax{\mat{W}_+^\star (\mat{X} - \mat{A}) \mat{W}_+} \geq t}.
\]

In our application, $\mat{X}$ is the empirical covariance matrix and $\mat{A}$ is the actual covariance matrix. The spectral norm dominates the maximum eigenvalue, so 
\begin{align*}
 \Prob{\lambda_k(\widehat{\mat{C}}_n) \geq \lambda_k(\mat{C}) + t} & \leq \Prob{\lambdamax{\mat{W}_+^\star(\widehat{\mat{C}}_n - \mat{C})\mat{W}_+} \geq t}\\
& \leq \Prob{\|\mat{W}_+^\star (\widehat{\mat{C}}_n - \mat{C}) \mat{W}_+\| \geq t} = \Prob{\|\mat{W}_+^\star \widehat{\mat{C}}_n \mat{W}_+ - \mat{S}^2 \| \geq t },
\end{align*}
where $\mat{S}$ is the square root of $\mat{W}_+^\star \mat{C} \mat{W}_+.$ Now factor out $\mat{S}^2$ and identify $\lambda_k(\mat{C}) = \|\mat{S}^2\|$ to obtain
\begin{align*}
 \Prob{\lambda_k(\widehat{\mat{C}}) \geq \lambda_k(\mat{C}) + t} & \leq \Prob{\|\mat{S}^{-1} \mat{W}_+^\star \widehat{\mat{C}}_n \mat{W}_+ \mat{S}^{-1} - \mathbf{I} \| \|\mat{S}^2\| \geq t } \\
 & = \Prob{\|\mat{S}^{-1} \mat{W}_+^\star \widehat{\mat{C}}_n \mat{W}_+ \mat{S}^{-1} - \mathbf{I} \| \geq t/\lambda_k(\mat{C})}.
\end{align*}
Note that if $\vec{\eta}$ is drawn from a $\mathcal{N}(\vec{0}, \mat{C})$ distribution, then the covariance matrix of the transformed sample $\mat{S}^{-1}\mat{W}_+^\star \vec{\eta}$ is the identity:
\[
\E \left(\mat{S}^{-1}\mat{W}_+^\star \vec{\eta} \vec{\eta}^\star \mat{W}_+ \mat{S}^{-1}\right) = \mat{S}^{-1} \mat{W}_+^\star \mat{C} \mat{W}_+ \mat{S}^{-1} = \mathbf{I}.
\]
Thus $\mat{S}^{-1} \mat{W}_+^\star \widehat{\mat{C}}_n \mat{W}_+ \mat{S}^{-1}$ is the empirical covariance matrix of a standard Gaussian vector in $\R^{p-k+1}.$ By Theorem 1 of \cite{ALPT10b}, it follows that $\hat{\lambda}_k$ is unlikely to overestimate $\lambda_k$ in relative error when the number $n$ of samples is $\Omega(p-k+1).$ A similar argument shows that $\hat{\lambda}_k$ is unlikely to underestimate $\lambda_k$ in relative error when $n = \Omega(\kappa_p^2 k).$

Similarly, for more general distributions, the bounds on the probability of $\hat{\lambda}_k$ overestimating or underestimating $\lambda_k$ can be tightened beyond those suggested in Theorem \ref{thm:covarest} by using the results in \cite{ALPT10b} or \cite{V10}. Note, however, that one cannot use knowledge of spectral decay to sharpen the results obtained from \cite{ALPT10b} and \cite{V10} into estimates like those given in Theorem \ref{thm:examplecovarest}.

Finally, we note that the techniques developed in the proof of Theorem \ref{thm:covarest} can be used to investigate the spectrum of the error matrices $\widehat{\mat{C}}_n - \mat{C}.$
\subsection{Proofs of the supporting lemmas}

We now establish the lemmas used in the proof of Theorem \ref{thm:covarest}.

\begin{proof}[Proof of Lemma \ref{lemma:splittail}]
The probability that $\lambda_k(\mat{X})$ overestimates $\lambda_k(\mat{A})$ is controlled with the sequence of inequalities
\begin{align*}
\Prob{\lambda_k(\mat{X}) \geq \lambda_k(\mat{A}) + t} & = \Prob{ \inf_{\mat{W} \in \Isom{p-k+1}{p}} \lambdamax{\mat{W}^*\mat{X}\mat{W}} \geq \lambda_k(\mat{A}) + t} \\
& \leq \Prob{\lambdamax{\mat{W}_+^*\mat{X}\mat{W}_+} \geq \lambda_k(\mat{A}) + t}. 
\end{align*}

We use a related approach to study the probability that $\lambda_k(\mat{X})$ underestimates $\lambda_k(\mat{A}).$ Our choice of $\mat{W}_-$ implies that 
\begin{align*}
\Prob{\lambda_k(\mat{X}) \leq \lambda_k(\mat{A}) - t} & = \Prob{\max_{\mat{W} \in \Isom{k}{p}} \lambdamin{ \mat{W}^\star \mat{X} \mat{W}} \leq \lambda_k(\mat{A}) - t} \\
& \leq \Prob{\lambdamin{\mat{W}_-^*\mat{X}\mat{W}_-} \leq  \lambda_k(\mat{A}) - t } \\
& = \Prob{\lambdamax{\mat{W}_-^\star (-\mat{X}) \mat{W}_-} \geq - \lambda_k(\mat{A}) + t}.
\end{align*}
This establishes the bounds on the probabilities of $\lambda_k(\mat{X})$ deviating above or below $\lambda_k(\mat{A}).$
\end{proof}

\begin{proof}[Proof of Lemma \ref{lemma:momentbound}]

Factor the covariance matrix of $\vec{\xi}$ as $\mat{G} = \mat{U\Lambda U}^\star$ where $\mat{U}$ is orthogonal and $\mat{\Lambda} = \text{diag}(\lambda_1, \ldots, \lambda_p)$ is the matrix of eigenvalues of $\mat{G}$. Let $\vec{\gamma}$ be a $\mathcal{N}(\vec{0},\mathbf{I}_p)$ random variable. Then $\vec{\xi}$ and $\mat{U\Lambda}^{1/2} \vec{\gamma}$ are identically distributed, so 
\begin{align} 
\E (\vec{\xi}\vec{\xi}^\star)^m & = \E\left[(\vec{\xi}^\star\vec{\xi})^{m-1} \vec{\xi}\vec{\xi}^\star \right] = \E\left[ (\vec{\gamma}^\star \mat{\Lambda} \vec{\gamma})^{m-1} \mat{U\Lambda}^{1/2} \vec{\gamma}\vec{\gamma}^\star \mat{\Lambda}^{1/2} \mat{U}^\star \right]\notag \\
 & = \mat{U\Lambda}^{1/2} \E \left[ (\vec{\gamma}^\star \mat{\Lambda} \vec{\gamma})^{m-1} \vec{\gamma}\vec{\gamma}^\star \right] \mat{\Lambda}^{1/2}\mat{U}^\star. 
\label{eqn:rankonewishartpowers}
\end{align}
Consider the $(i,j)$ entry of the bracketed matrix in \eqref{eqn:rankonewishartpowers}:
\begin{equation}
\E\left[(\vec{\gamma}^\star \mat{\Lambda} \vec{\gamma})^{m-1} \gamma_i\gamma_j \right] = \E\left[\left(\sum\nolimits_{\ell=1}^p \lambda_\ell \gamma_\ell^2 \right)^{m-1} \gamma_i \gamma_j\right]. 
\label{eqn:gaussianchaos}
\end{equation}
From this expression, and the independence of the Gaussian variables $\{ \gamma_i \},$ we see that this matrix is diagonal.

To bound the diagonal entries, use a multinomial expansion to further develop the sum in \eqref{eqn:gaussianchaos} for the $(i,i)$ entry:
\[
 \E\left[(\vec{\gamma}^\star \mat{\Lambda} \vec{\gamma})^{m-1} \gamma_i^2 \right]  
= \sum_{\ell_1 + \cdots +\ell_p = m-1} \binom{m-1}{\ell_1, \ldots, \ell_p} \lambda_1^{\ell_1} \cdots \lambda_p^{\ell_p} \E\left[\gamma_1^{2\ell_1} \cdots \gamma_p^{2\ell_p} \gamma_i^2 \right].
\]
Denote the $L_r$ norm of a random variable $X$ by
\[ 
  \lnorm{r}{X} = \left(\E |X|^r \right)^{1/r}. 
\]
Since $\ell_1, \ldots, \ell_p$ are nonnegative integers summing to $m-1$, the generalized AM-GM inequality justifies the first of the following inequalities:
\begin{align*}
\E \gamma_1^{2\ell_1} \cdots \gamma_p^{2\ell_p} \gamma_i^2 & \leq  \E \left(\frac{\ell_1 |\gamma_1| + \cdots + \ell_p |\gamma_p| + |\gamma_i|}{m}\right)^{2m} =
 \lnorm{2m}{ \frac{1}{m} \left(|\gamma_i| + \sum_{j=1}^p \ell_j |\gamma_j| \right) }^{2m} \\
 & \leq \left( \frac{1}{m} \left( \lnorm{2m}{\gamma_i} + \sum_{j=1}^p \ell_j \lnorm{2m}{\gamma_j} \right) \right)^{2m} \\
 & = \left(\frac{1 + \ell_1 + \ldots + \ell_p}{m} \right)^{2m} \lnorm{2m}{g}^{2m} = \E (g^{2m}).  
\end{align*}
The second inequality is the triangle inequality for $L_r$ norms. Now we reverse the multinomial expansion to see that the diagonal terms satisfy the inequality
\begin{align}
\E\left[(\vec{\gamma}^\star \mat{\Lambda} \vec{\gamma})^{m-1}\gamma_i^2\right] & \leq \sum_{\ell_1 + \cdots + \ell_p = m-1} \binom{m-1}{\ell_1, \ldots, \ell_p} \lambda_1^{\ell_1} \cdots \lambda_p^{\ell_p} \E (g^{2m}) \notag \\
&  = (\lambda_1 + \ldots + \lambda_p)^{m-1}\E ( g^{2m}) = \tr(\mat{G})^{m-1} \E (g^{2m}). 
\label{eqn:diagonalest}
\end{align}
Estimate $\E(g^{2m})$ using the fact that $\Gamma(x)$ is increasing for $x \geq 1:$
\begin{align*}
\E \left( g^{2m} \right) & = \frac{2^m}{\sqrt{\pi}} \Gamma(m + 1/2) < \frac{2^m}{\sqrt{\pi}} \Gamma(m+1) = \frac{2^m}{\sqrt{\pi}} m! \quad \text{for } m \geq 1.
\end{align*}
Combine this result with \eqref{eqn:diagonalest} to see that
\[
\E\left[(\vec{\gamma}^\star \mat{\Lambda} \vec{\gamma})^{m-1}\vec{\gamma}\vec{\gamma}^\star \right] \preceq \frac{2^m}{\sqrt{\pi}} m! \tr(\mat{G})^{m-1} \cdot \mathbf{I}. 
\] 
Complete the proof by using this estimate in \eqref{eqn:rankonewishartpowers}.

\end{proof}


\bibliographystyle{amsalpha}
\bibliography{minimax_matrix_laplace_transform}
\end{document}